\newdimen\prevdp
\def\leftlabel#1{\noalign{\prevdp=\prevdepth
		\kern-\prevdp\nointerlineskip\vbox to0pt{\vss\hbox{#1}}\kern\prevdp}}
\newcommand {\F}        {{\mathbb{F}}}
\newcommand {\C}        {{\mathbb{C}}}
\newcommand {\R}        {{\mathbb{R}}}
\newcommand{\norm}[1]{\left\Vert#1\right\Vert}
 \newtheorem{theorem}{Theorem}[section]
 \newtheorem{lemma}[theorem]{Lemma}
  \numberwithin{equation}{section}
 \newtheorem{assumption}[theorem]{Assumption}
 \newtheorem{remark}[theorem]{Remark}
\newcommand {\cH}  {\mathcal{H}}
\newcommand {\cV}  {\mathcal{V}}
\newcommand {\Hinf}	{{\mathcal H}_{\infty}}
\newcommand {\ri}	{{\mathrm i}}
\DeclareMathOperator{\Col}{Col}
\DeclareMathOperator{\rank}{rank}
\DeclareMathOperator{\Real}{Re}
\DeclareMathOperator{\Imag}{Im}
\title{Subspace Method for the Estimation of Large-Scale Structured 
Real Stability Radius} 
\author{
Nicat Aliyev\thanks{Charles University, Faculty of Mathematics and Physics, Department of Numerical Mathematics, Sokolovska 83, 18675 Praha, Czech Republic.  E-Mail: \texttt{aliyev@karlin.mff.cuni.cz}. This work was supported by OP RDE, project No. CZ.02.2.69/0.0/0.0/18$\_$053/0016976. International mobility of research, technical and administrative staff at Charles University.} 
}
\begin{document}
\maketitle

\begin{abstract}

\noindent 
We consider the autonomous dynamical system 
$x' = Ax,$
with $A\in\R^{n\times n}.$ 
This linear dynamical system is said to be \textit{asymptotically stable} if all of the eigenvalues of $A$ lie in the open left-half of the complex plane. In this case, the matrix $A$ is said to be \textit{ Hurwitz stable} or shortly a \textit{stable} matrix. In practice, stability of a system can be violated because of arbitrarily small perturbations such as modeling errors. In such cases, one deals with the robust stability of the system rather than its stability. The system above is said to be\textit{ robustly stable} if the system, as well as all of its arbitrarily small perturbations, are stable. To measure the robustness of the system subject to perturbations, a quantity of interest is the \textit{stability radius} or in other words \textit{distance to instability}. 
In this paper we focus on the estimation of the structured 
real stability radius for large-scale systems. We propose a subspace framework to estimate the structured real stability radius and prove that our new method converges at a quadratic rate in theory. Our method benefits from a one-sided interpolatory model order reduction technique, in a sense that the left and the right subspaces are the same. 
The quadratic convergence of the method is due to the certain Hermite interpolation properties between the full and reduced problems. The proposed framework estimate the structured 
real stability radius 
for large-scale systems efficiently. The efficiency of the method is demonstrated on several numerical experiments.

\vskip 1ex

\noindent
\textbf{Key words.}
real stability radius, structured, large-scale, projection, singular values, Hermite
interpolation, model order
reduction, greedy search.

\vskip 1ex

\end{abstract}

\section{Introduction}
Consider the autonomous dynamical system 
\begin{equation}\label{eq.sys}
x' = Ax,
\end{equation} 
with $A\in\F^{n\times n}$ and $\F = \R$ or $\F = \C.$
One important property of this linear dynamical system is the notion of \textit{stability.} Formally, system \eqref{eq.sys} is said to be \textit{asymptotically stable} if all of the eigenvalues of $A$ lie in the open left-half of the complex plane, i.e., $\Lambda(A)\subset \C^- $, where $\Lambda(A)$ denotes the spectrum of $A,$ $\C^- := \left\{z \in \C \; :\;  \Real(z) < 0\right\}$ and $\Real(\cdot)$ stands for the real part of its argument \cite{Dullerud2000}. In this case, the matrix $A$ is said to be \textit{ Hurwitz stable} or shortly a \textit{stable} matrix. In practice, arbitrarily small perturbations may cause the stable system to become unstable. In other words, some or all of the eigenvalues of $A$ may be moved into the right-half plane by applying small perturbations. In such cases, robust stability of the system becomes much more important. The system \eqref{eq.sys} is said to be\textit{ robustly stable} if the system, as well as all of its arbitrarily small perturbations, are stable. 
To detect such cases and to measure the robustness of the system subject to perturbations, a quantity of interest is the \textit{stability radius} or in other words \textit{distance to instability}. The unstructured and structured stability radii have been introduced in \cite{Hinrichsen1986a, Hinrichsen1986b, Hinrichsen1990, Vanloan1985}. 
The structured stability radius of a matrix triple $(A,B,C)\in\F^{n\times n} \times \F^{n\times m} \times \F^{p\times n} $ is defined by 
\begin{equation}\label{eq:str}
r_{\F}(A;B,C): = \inf\left\{ \Vert \Delta \Vert_2 \; : \;  \Delta\in\F^{m\times p} \; \text{and} \;  A + B\Delta C \; \text{is unstable} \right\} 
\end{equation}
\noindent where $\Delta$ is a perturbation (disturbance) matrix, $B\in\F^{n\times m} $ and $C\in\F^{p\times n}$ are restriction matrices that determine the structure of the perturbation. For example, the matrices $B$ and $C$ may reflect the possibility that only certain elements of $A$ are subject to perturbations \cite{Hinrichsen1986a}.
When $B = I $ and $C = I,$ we abbreviate $r_F(A;I,I)$ by $r_\F(A)$ and call it the unstructured stability radius of $A.$  
In many applications (see e.g., \cite{Trefethen2005} ), it is required to consider only a certain class of perturbations. For example, when $A$ is a real matrix, then to allow only real perturbations is more plausible. In this paper, we turn our attention into the real structured 
perturbations only. For real $(A,B,C)$, $r_{\R}(A;B,C)$ and $r_{\R}(A)$ are called the real structured and unstructured stability radii, respectively. 

Let us denote the singular values of $M\in\F^{p\times m},$ ordered non-increasingly, by 
$\sigma_{i}(M),$ $i = 1,2, \ldots, \min\left\{p,m \right\},$ and denote the real and imaginary parts of $M$ by $\Real(M)$ and $\Imag(M),$ respectively. 
A formula for $r_\R(A; B,C)$ given by Qiu $et \,al.$ \cite{Qiu1995} in the form of an optimization problem in two variables is as follows:
\begin{equation}\label{eq:basic}
r_{\R}(A;B,C)^{-1} = \sup_{\omega\in\R} \inf_{\gamma\in (0,1]}\sigma_{2}\left(
\begin{bmatrix}
\Real( C(\ri\omega I-A)^{-1}B ) &  -\gamma \Imag(C(\ri\omega I-A)^{-1}B) \\
\frac{1}{\gamma} \Imag(C(\ri\omega I-A)^{-1}B) & \Real(C(\ri\omega I-A)^{-1}B)
\end{bmatrix} 
\right).
\end{equation}
Setting
\begin{equation} 
H(s):= C(sI-A)^{-1}B  \label{eq:trans} 
\end{equation}
and
\begin{equation}
\mu(M):= \inf_{\gamma\in (0,1]}\sigma_{2}\left(
\begin{bmatrix}
\Real(M) & -\gamma \Imag(M) \\
\frac{1}{\gamma} \Imag(M) & \Real(M)
\end{bmatrix}\right) \label{eq:mu},
\end{equation}
the problem in \eqref{eq:basic} takes the form of
\begin{equation} \label{eq:supmu}
r_{\R}(A;B,C) = \left\{ \sup_{\omega\in\R}\mu \left(H(\ri\omega)\right) \right\}^{-1}.
\end{equation}
Computationally more plausible characterization for the unstructured real stability radius $r_\R(A)$ given by Qiu $et \, al.$ is as follows:
\begin{equation}
r_{\R}(A) := \min_{\omega\in\R} \max_{\gamma\in (0,1]}\sigma_{2n-1}\left(
\begin{bmatrix}
A & -\omega \gamma I \\
\frac{\omega}{\gamma} & A
\end{bmatrix}\right).
\end{equation}
It was proved in \cite{Qiu1995} that, the function to be minimized in \eqref{eq:mu} is unimodal on $(0, 1].$ Therefore, any local minimiser is actually a global minimiser, and the computation of $\mu(M)$ for a given $M$ is the easier part of the problem. In fact, any search algorithm can be used to solve the inner minimization problem \eqref{eq:mu} with the guarantee of global convergence. But the outer maximization problem in \eqref{eq:supmu} is still challenging, especially when $n$ is large.

\subsection{Literature Review} Several algorithms have been proposed so far to estimate the structured and unstructured real stability radii given by the characterizations above. Based on the well known correspondence between the singular values of the transfer function and the imaginary eigenvalues of certain Hamiltonian matrices, Sredhar $et. al$ proposed an iterative method \cite{Sredhar1996}. Although numerical experiments on their work suggest that the rate of the convergence of their algorithm is quadratic, a rigorous proof does not appear in the paper. Repeated solution of Hamiltonian eigenvalue problems of size twice the order of the system, makes this method inconvenient for large-scale problems. Freitag and Spence \cite{Freitag2014b} focus on the unstructured case and they propose an implicit determinant method using Newton’s method. Their method also benefits from the link between the singular values of the transfer function and the imaginary eigenvalues of the Hamiltonian matrices. To find the critical point corresponding to the desired singular value, they impose the implicit determinant method. The advantage of their method is that, instead of solving singular value problems and Hamiltonian eigenvalue problems, they solve linear systems at each Newton step. This makes their method more efficient and convenient for large-scale systems.
Another method to handle the large-scale problems is proposed by Guglielmi and Manetta \cite{Guglielmi2014b}.
Their method consists of the inner and outer iterations and does not rely on the characterization \eqref{eq:basic}. The inner iteration approximates the $\epsilon-$ pseudospectral abscissa from the above for a fixed $\epsilon$, whereas the outer iteration varies $\epsilon$ by means of the Newton iterations. Their method converges locally and provides only upper bounds for the stability radius. 
More recent works for the structured real stability radius of sparse systems are \cite{Katewa2020, Guglielmi2017}. But the distance measure involved in these works is Frobenius norm. 
Katewa and Pasqualetti \cite{Katewa2020} formulate the stability radius problem as an equality-constrained optimization problem and by means of Lagrangian method, they characterize the optimality conditions by revealing the relation between an optimal perturbation and the eigenvectors of an optimally perturbed system. Consequently, they develop a gradient descent algorithm that converges locally. The method in \cite{Guglielmi2017} is based on the relationship between the spectral value set abscissa and the real stability radius. 

In this work, we are concerned with the computation of the structured 
real stability radius for large systems. 
We assume that the number columns of the restriction matrix $B$ and the number of rows of the restriction matrix $C$ are relatively small, i.e., $n\gg m,p,$ which is usually the case in practice. 
We propose a subspace framework to estimate $r_\R(A;B,C).$
For this, we adapt the technique of our recent works \cite{Aliyev2017, Aliyev2020}, which are devised to compute and to minimize the $\Hinf$ norm of large-scale systems, respectively.
The proposed method converges fast with respect to the subspace dimension. This fast convergence is verified by a rigorous analysis and observed by means of several numerical examples. 

The rest of this work is organized as follows. Next section describes a subspace framework for the structured real stability radius. The method is based on an interpolatory model order reduction technique. In section \ref{sec:roc} we give a rigorous rate of convergence analysis of the subspace framework described in the next section. Section \ref{sec:nr} is devoted to numerical experiments that illustrate fast convergence of the algorithm presented in this work. 

\section{Subspace Framework for $r_{\R}(A; B, C)$}\label{sec:subspace}
In this section we present a subspace framework for the estimation of the structured real stability radius $r_{\R}(A; B, C)$ for a large-scale stable matrix $A\in\R^{n \times n}$ and the given restriction matrices $B \in\R^{n \times m}, C\in\R^{p\times n}.$
The matrix valued function $H(s) = C(sI - A)^{-1}B$ in \eqref{eq:trans}
corresponds to the transfer function of the standard linear time-invariant system 
\begin{align}\label{eq.desc}
x' &= Ax + Bu \\ \nonumber
y &= Cx.
\end{align}
with $A\in\R^{n\times n}, B\in\R^{n\times m}$ and $C\in\R^{p\times n}$
whose $\cH_\infty$-norm is defined by
\begin{equation*}
 \left\| H \right\|_{\cH_\infty} := \sup_{s \in \C^+} \left\| H(s) \right\|_2 = \sup_{s \in \partial\C^+} \left\| H(s) \right\|_2 = \sup_{\omega \in \R} \sigma_1(H(\ri \omega)). 
\end{equation*}
Inspired by \cite{Kangal2018}, in our recent works \cite{Aliyev2017, Aliyev2020}, we proposed subspace frameworks by employing two-sided projections which aim at approximating and minimizing the $\cH_\infty$ norm of large-scale control systems. The main idea of this section is inspired by those works. One significant difference is that here the second largest singular value function is involved in a maximin problem, whereas in \cite{Aliyev2017} we consider the maximization of the largest singular value function and in \cite{Aliyev2020} the largest singular value function is involved in a minimax problem 
Another and more challenging difference is that, here the real and imaginary parts of the transfer function $H$ appears as block components of the block matrix 
$$ T(\omega, \gamma): = \begin{bmatrix}
\Real(H(\ri\omega)) & -\gamma \Imag(H(\ri\omega)) \\
\frac{1}{\gamma} \Imag(H(\ri\omega)) & \Real(H(\ri\omega))
\end{bmatrix}.$$
The latter difference motivates us to construct the subspace framework in a slightly different way from the previous works. Indeed, here we resort to one-sided projection in a sense that the left and the right subspaces are the same. Furthermore, we form the subspace in a way so that the algorithm we discuss here converges quadratically.
Since the large scale nature of the problem in the characterization \eqref{eq:basic} is hidden in the middle factor of $C(sI-\omega)^{-1}B,$ when $m,p$ are relatively small, we focus on the reduction of the middle factor $D(s):=(sI-A)$ of $H(s)$ to a much smaller dimension using one-sided projection. To this end, we determine a subspace $\cV\subset \C^n$ of small dimension and a matrix $V$ whose columns form an orthonormal basis for $\cV.$ 
Then the projected reduced problem is defined in terms of the matrices
$$
A^V = V^{*}AV, \quad B^V = V^*B, \quad C^V = CV.
$$
More precisely, the reduced problem is defined by
\begin{equation}\label{eq.outer}
r^\cV(A;B,C) := \left\{ \sup_{\omega\in\R}\mu(H^\cV(\ri\omega) \right\}^{-1}
\end{equation}
where  
\begin{equation}\label{eq:red_trans}
H^\cV(s): =C^V(sI-A^V)^{-1}B^V.
\end{equation}
In our subspace framework, the main objective is to get Hermite interpolation properties between the original problem defined in terms of $H(s)$ and the reduced problem defined in terms of $H^\cV(s)$, which in turn give rise to a quadratic convergence. Formally, for a given $\omega\in\R,$ we aim to form the subspace $\cV$ in a way so that, we have
\begin{align}
\mu(H(\ri\omega)) &= \mu\left(H^\cV(\ri\omega) \right),  
\label{mu:hermite1} \\
\mu'\left(H(\ri\omega)\right) &= 
\mu'\left( H^\cV(\ri\omega) \right) \label{mu:hermite2} 
\quad \text{and} \\
\mu''\left(H(\ri\omega)\right)& =  \mu''\left( H^\cV(\ri\omega) \right) 
\label{mu:hermite3}  
\end{align}
The following theorem, a particular instance of \cite[Theorem~1]{Gugercin2009}, is helpful for this purpose.
\begin{theorem}\label{thm:Gal_interpolate}
Let $H(s)$ and $H^\cV(s) $ be defined as in (\ref{eq:trans}) and \eqref{eq:red_trans}, respectively.
For given $\widehat{s} \in {\mathbb C}$, $\: \widehat{b} \in {\mathbb C}^m$ and a positive integer $N$, if
 	\begin{equation}\label{eq:subspace_incl}
		\left[ (\widehat{s}I - A)^{-1} \right]^\ell B \widehat{b}	\; \in \; {\mathcal V}		\quad \quad {\rm for} \;\;	\ell = 1, \dots, N,
 	\end{equation}
and $V$ has orthonormal columns, that is, $V^* V = I$, 
then, denoting the $\ell$-th derivatives of
	$H(s)$ and $H^\cV(s)$ at the point $\widehat{s}$ with $H^{(\ell)}(\widehat{s})$ and $\left[H^{\mathcal V}\right]^{(\ell)}(\widehat{s})$,
	we have
	\begin{equation}\label{eq:tangent_interpolate}
	 	H^{(\ell)} (\widehat{s}) \widehat{b}	=	\big[ H^{\mathcal{V} } \big]^{(\ell)} (\widehat{s}) \widehat{b}	\quad\quad {\rm for} \;\;	\ell = 0, \dots, N-1
	 \end{equation}
provided that both \, $\widehat{s} I - A$ and\, $\widehat{s} I - A^V$ are invertible.
%
 %
 %
%
%
\end{theorem}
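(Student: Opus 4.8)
The plan is to reduce the derivative conditions \eqref{eq:tangent_interpolate} to a finite list of moment-matching identities, and then to establish those identities by an induction that exploits the Galerkin (one-sided) projection structure of $V$. First I would differentiate the resolvent: since $\frac{d^\ell}{ds^\ell}(sI-A)^{-1} = (-1)^\ell \ell!\,(sI-A)^{-(\ell+1)}$, we obtain
\[
H^{(\ell)}(\widehat{s})\widehat{b} = (-1)^\ell \ell!\, C(\widehat{s}I-A)^{-(\ell+1)}B\widehat{b},
\]
and the same formula holds for $H^{\cV}$ with $A,B,C$ replaced by $A^V,B^V,C^V$. Cancelling the common scalar $(-1)^\ell \ell!$, proving \eqref{eq:tangent_interpolate} for $\ell=0,\dots,N-1$ is therefore equivalent to establishing
\[
C(\widehat{s}I-A)^{-k}B\widehat{b} = C^V(\widehat{s}I-A^V)^{-k}B^V\widehat{b}, \qquad k=1,\dots,N.
\]

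Next I would set $\widehat{A} := \widehat{s}I-A$ and note that, because $V^*V=I$, the reduced resolvent denominator factors as $\widehat{s}I-A^V = \widehat{s}I - V^*AV = V^*\widehat{A}V$. Introduce the full-order vectors $x_k := \widehat{A}^{-k}B\widehat{b}$ and the reduced-order vectors $x_k^V := (V^*\widehat{A}V)^{-k}B^V\widehat{b}$. The heart of the argument is the claim that $x_k = Vx_k^V$ for every $k=1,\dots,N$. Granting this, the moment identity above is immediate, since $C^V(V^*\widehat{A}V)^{-k}B^V\widehat{b} = CVx_k^V = Cx_k = C\widehat{A}^{-k}B\widehat{b}$.

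I would prove the claim by induction on $k$, using two facts: the projection identity $VV^*w = w$ valid for every $w\in\cV$ (available because, by hypothesis \eqref{eq:subspace_incl}, each $x_k$ lies in $\cV$ for $k=1,\dots,N$), and the recursion $\widehat{A}x_{k+1}=x_k$. For the base case, projecting $\widehat{A}x_1 = B\widehat{b}$ by $V^*$ and substituting $x_1 = V(V^*x_1)$ gives $(V^*\widehat{A}V)(V^*x_1) = B^V\widehat{b}$, so $V^*x_1 = x_1^V$ and $x_1 = Vx_1^V$. For the inductive step, assuming $x_k=Vx_k^V$, I project $\widehat{A}x_{k+1}=x_k$ by $V^*$ and use $x_{k+1}=V(V^*x_{k+1})$ to get $(V^*\widehat{A}V)(V^*x_{k+1}) = V^*x_k = x_k^V$; inverting $V^*\widehat{A}V$ yields $V^*x_{k+1}=x_{k+1}^V$ and hence $x_{k+1}=Vx_{k+1}^V$.

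I do not expect a genuine obstacle: the result is exactly the one-sided specialization of the rational tangential interpolation theorem of \cite{Gugercin2009}, and the proof is the standard moment-matching induction. The only point demanding care is the index bookkeeping, namely that the projection identity is applied to $x_k$ all the way up to $k=N$, which is precisely why the subspace inclusion \eqref{eq:subspace_incl} is imposed through $\ell=N$ while the matched derivatives run only through $\ell=N-1$. The invertibility hypotheses on $\widehat{s}I-A$ and $\widehat{s}I-A^V$ are what make the vectors $x_k$, $x_k^V$, and therefore the entire induction, well defined.
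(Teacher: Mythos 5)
Your proof is correct. Note that the paper does not supply its own proof of Theorem~\ref{thm:Gal_interpolate}: it states the result as a particular instance of Theorem~1 in \cite{Gugercin2009} and relies on that citation. Your argument --- reducing the derivative conditions via $H^{(\ell)}(\widehat{s})\widehat{b}=(-1)^\ell \ell!\,C(\widehat{s}I-A)^{-(\ell+1)}B\widehat{b}$ to the moment identities for $k=1,\dots,N$, and then proving $x_k=Vx_k^V$ by induction using $\widehat{s}I-A^V=V^*(\widehat{s}I-A)V$ and the projector identity $VV^*w=w$ for $w\in\cV$ --- is exactly the standard moment-matching proof of that cited result, specialized to the one-sided case where the left and right subspaces coincide, and your index bookkeeping (inclusion through $\ell=N$ yielding matched derivatives through $\ell=N-1$) is the correct accounting.
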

Theorem \ref{thm:Gal_interpolate} yields us a direction that indicates how to construct the subspace $\cV$ so that the Hermite interpolation properties \eqref{mu:hermite1}-\eqref{mu:hermite3} are satisfied between
the original and the reduced problems.
For a given $\widetilde{\omega}\in\R,$ setting
$$
\cV:= \Col \left(
\begin{bmatrix}
(\ri \widetilde{\omega} I -A)^{-1}B & (\ri\widetilde{\omega} I - A)^{-2}B & (\ri\widetilde{\omega} I - A)^{-3}B
 \end{bmatrix}
 \right)
 $$
where $\Col(M)$ denotes the column space of the matrix $M,$ we conclude from Theorem \ref{thm:Gal_interpolate}  that
$$
H(\ri\widetilde{\omega}) = H^{\cV}(\ri\widetilde{\omega}), \quad   H'(\ri{\widetilde{\omega}}) =\left[H^{\cV}\right]'(\ri{\widetilde{\omega}})
\quad \text{and} \quad
H''(\ri{\widetilde{\omega}}) =\left[H^{\cV}\right]''(\ri{\widetilde{\omega}})
$$
which in turn imply
\begin{eqnarray}
T(\widetilde{\omega}, \gamma) &=& T^\cV(\widetilde{\omega},\gamma),  \label{eq:Thermite1} \\
\frac {\partial T}{\partial \omega} \left( \widetilde{\omega}, \gamma \right) 
&=& \frac {\partial T^\cV}{\partial \omega} \left( \widetilde{\omega}, \gamma \right) \label{eq:Thermite2} \\
\frac {\partial^2 T}{\partial \omega^2} \left( \widetilde{\omega}, \gamma \right) 
&=& \frac{ \partial^2 T^\cV }{\partial \omega^2} \left( \widetilde{\omega}, \gamma \right) 
\label{eq:Thermite3}
\end{eqnarray}
for all $\gamma \in(0,1],$ where
$$ T^\cV(\omega, \gamma): = \begin{bmatrix}
\Real(H^\cV(\ri \omega) & -\gamma \Imag(H^\cV(\ri\omega)) \\
\frac{1}{\gamma} \Imag(H^\cV(\ri\omega)) & \Real(H^\cV(\ri\omega))
\end{bmatrix}.$$
The equalities \eqref{eq:Thermite1}-\eqref{eq:Thermite3} give rise to the Hermite interpolation properties between 
the reduced and original problems. We will discuss this in detail in Theorem \ref{thm:basic_int}.

The subspace framework for the computation of $r_{\R}(A;B,C)$ is summarized in Algorithm \ref{alg1}, where and throughout the rest of this work, we use the short-hand notations
\[
	 \sigma_i(\omega,\gamma) := \sigma_i(T( \omega,\gamma)) \quad \text{and} \quad  \sigma_i^{{\mathcal V}}(\omega,\gamma) := \sigma_i\left( T^{{\mathcal V}}(\omega,\gamma) \right) \quad \quad 
\]
for $i = 1,\ldots, \min\left\{m,p \right\}.$
We also use the notations
\[
	 \mu(\omega) := \mu(H(\ri \omega)) \quad \text{and} \quad  \mu^{{\mathcal V}}(\omega) := \mu\left( H^{{\mathcal V}}(\ri\omega) \right).
\]
Moreover, 
$\operatorname{orth}(M)$ stands for a matrix whose columns form an orthonormal basis for the column space of the matrix $M.$
Finally, we define the unique minimizers of of the inner iterations of the original and reduced problems by
\begin{equation}\label{eq:inner_opt}
\gamma(\omega):= \arg\min_{\gamma\in(0,1]}\sigma_2(\omega,\gamma)
\quad \quad \text{and} \quad \quad 
\gamma^{\cV_k}(\omega):= \arg\min_{\gamma\in(0,1]}\sigma_2^{\cV_k}(\omega,\gamma).
\end{equation}

\begin{algorithm}[h]
 \begin{algorithmic}[1]
\REQUIRE{Matrices $A\in\R^{n\times n}, \; B\in\R^{n\times m},\;  C\in\R^{p\times n}$.}

\ENSURE{Sequence $ \{ \mu_k \}$ s.t. $1/{\mu_k}$ for sufficiently large $k$ is an estimate for $r_{\R}(A; B,C)$.}

 \STATE Choose initial interpolation point $\omega_{1}\in\R$  

\STATE $V_1 \gets  \operatorname{orth} \left( \begin{bmatrix} (\ri\omega_1 I -A)^{-1}B  \; & (\ri\omega_1 I -A)^{-2}B \;
 & (\ri\omega_1 I -A)^{-3}B \;
									 	\end{bmatrix} \right)$.  \label{init_subspaces}

\FOR{$k = 1,\,2,\,\dots$}
	\STATE
	$\omega_{k+1} \gets  \arg \max_{\omega \in \R} \mu^{{\mathcal V}_k}\left(\omega\right)$. \label{line:opt_omega}
	\STATE $\widetilde{V}_{k+1} \gets  \begin{bmatrix} (\ri\omega_{k+1}I-A)^{-1}B \; & (\ri\omega_{k+1}I-A)^{-2}B \; & (\ri\omega_{k+1}I-A)^{-3}B \; 
	 \end{bmatrix}$.
	\label{later_subspaces}
%
	\STATE $V_{k+1} \gets \operatorname{orth}\left(\begin{bmatrix} V_{k} \; & \widetilde{V}_{k+1} \end{bmatrix}\right)$ and 
	${\mathcal V}_{k+1} \gets \Col\big(V_{k+1}\big)$. \label{orth_subspaces}

	\STATE $\mu_{k+1} \gets  \mu^{ \cV_{k+1}}(\omega_{k+1})$.
\ENDFOR
\end{algorithmic}
\caption{$\;$ Subspace Method for Large-Scale Estimation of the  Structured Real Stability Radius $r_{\R}(A; B, C)$.}
\label{alg1}
\end{algorithm}

At each iteration of Algorithm \ref{alg1}, a reduced problem is solved 
in line \ref{line:opt_omega} and a global maximizer  $\widehat{\omega}$ is found.
Then the subspace is expanded so that the certain Hermite interpolation properties 
are satisfied at optimal $\widehat{\omega}.$ 
Assuming that $\{\omega_k\}$ converges to a maximizer $\omega_*$ of $\mu(\omega),$ in the next section we show that under some mild assumptions the rate of convergence of Algorithm \ref{alg1} is quadratic. The next result concerns the Hermite interpolation properties between the original and the reduced problems.

\begin{lemma}\label{thm:basic_int}
The following assertions hold regarding Algorithm~\ref{alg1} for each $j = 1,\,\ldots,\,k$:\vspace{1ex}
\begin{enumerate}
    \item[\bf (i)] For all $\gamma \in\left(0,1\right],$ 
   \[
     \sigma_i(\omega_{j},\gamma ) = \sigma_i^{ {\mathcal V}_k}(\omega_{j},\gamma), 	\quad \quad \text{for} \quad \quad i = 1,2,3.
  \]
     \item[\bf (ii)] It holds that 
     $$\mu(\omega_{j}) = \mu^{\mathcal{V}_k}(\omega_{j}).$$ Furthermore, we have
     $\gamma (\omega_j) = \gamma^{{\mathcal V}_k} (\omega_j)$ 
 \item[\bf (iii)] If  $\sigma_2(\omega_{j}, \gamma(\omega_j))$ is simple, 
 then
\[
    \mu'(\omega_j)	 =\left[\mu^{\mathcal{V}_k}\right]'(\omega_j) 
	\] 
	\item[\bf (iv)] If  $\sigma_2(\omega_{j}, \gamma(\omega_j))$ is simple, 
 then
\[
    \mu''(\omega_j)	 =\left[\mu^{\mathcal{V}_k}\right]''(\omega_j) 
	\]

\end{enumerate}
\end{lemma}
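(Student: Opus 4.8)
The plan is to derive all four assertions from a single block-interpolation fact and then read off each part by differentiating the relevant value functions. First I would record that, by the subspace construction in lines~\ref{init_subspaces} and \ref{later_subspaces} of Algorithm~\ref{alg1}, for every $j\le k$ the columns of $(\ri\omega_j I-A)^{-\ell}B$, $\ell=1,2,3$, lie in $\mathcal V_k$. Applying Theorem~\ref{thm:Gal_interpolate} with $\widehat s=\ri\omega_j$, $N=3$ and each of the $m$ canonical basis vectors as the tangential direction $\widehat b$, then collecting the results column by column, yields the full block Hermite interpolation $H^{(\ell)}(\ri\omega_j)=[H^{\mathcal V_k}]^{(\ell)}(\ri\omega_j)$ for $\ell=0,1,2$. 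Splitting into real and imaginary parts and substituting into the block structure of $T$ gives exactly \eqref{eq:Thermite1}--\eqref{eq:Thermite3}, namely $T(\omega_j,\gamma)=T^{\mathcal V_k}(\omega_j,\gamma)$ together with the equality of the first two $\omega$-derivatives, valid for every $\gamma\in(0,1]$. Everything below is extracted from this.

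Part (i) is then immediate: since $T(\omega_j,\gamma)=T^{\mathcal V_k}(\omega_j,\gamma)$ as $2p\times 2m$ matrices, the two share all singular values, in particular for $i=1,2,3$. For part (ii), part (i) shows the scalar functions $\gamma\mapsto\sigma_2(\omega_j,\gamma)$ and $\gamma\mapsto\sigma_2^{\mathcal V_k}(\omega_j,\gamma)$ coincide on $(0,1]$; hence their infima agree, giving $\mu(\omega_j)=\mu^{\mathcal V_k}(\omega_j)$, and, since the inner problem is unimodal so that the minimizer is unique (cf.\ \cite{Qiu1995}), the minimizers agree as well, i.e.\ $\gamma(\omega_j)=\gamma^{\mathcal V_k}(\omega_j)$.

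For parts (iii) and (iv) I would reduce the derivatives of $\mu$ to partial derivatives of $\sigma_2$. Writing $\mu(\omega)=\sigma_2(\omega,\gamma(\omega))$ and using that $\gamma(\omega)$ is a minimizer—so either it is interior and $\partial_\gamma\sigma_2=0$ there, or it sits at the boundary $\gamma=1$ and is locally constant—the envelope identity $\mu'(\omega)=\partial_\omega\sigma_2(\omega,\gamma(\omega))$ holds. Differentiating the optimality condition $\partial_\gamma\sigma_2(\omega,\gamma(\omega))=0$ gives $\gamma'(\omega)=-\partial_{\omega\gamma}\sigma_2/\partial_{\gamma\gamma}\sigma_2$ and hence
$$ \mu''(\omega)=\partial_{\omega\omega}\sigma_2-\frac{(\partial_{\omega\gamma}\sigma_2)^2}{\partial_{\gamma\gamma}\sigma_2}, $$
all evaluated at $(\omega_j,\gamma(\omega_j))$ (the $\gamma'$-term being absent in the boundary case). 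Because $\sigma_2(\omega_j,\gamma(\omega_j))$ is assumed simple, a simple singular value is locally a real-analytic function of the matrix entries, so $\sigma_2$, $u_2$, $v_2$ are analytic near that point and the displayed formulas are legitimate; concretely one may use $\partial_\omega\sigma_2=\Real(u_2^*\,\partial_\omega T\,v_2)$ and the analogous first-order formulas in $\gamma$, together with the second-order formulas for $\partial_{\omega\omega}\sigma_2$, $\partial_{\gamma\gamma}\sigma_2$ and $\partial_{\omega\gamma}\sigma_2$.

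The crux, and the step I expect to be the main obstacle, is part (iv). The first-order formula needs only $\partial_\omega T$ and the single triplet $(\sigma_2,u_2,v_2)$, all of which match at $(\omega_j,\gamma(\omega_j))$ by \eqref{eq:Thermite2} and parts (i)--(ii); this settles (iii) cleanly. The second-order derivatives of $\sigma_2$, however, are determined by the bilinear map $D^2\sigma_2$ evaluated at the matrix $T$—which couples in the entire family of singular triplets of $T$, not just the second one. This is precisely why the full matrix equality \eqref{eq:Thermite1}, rather than mere equality of $\sigma_2$, is indispensable: it forces $D\sigma_2$ and $D^2\sigma_2$ for the full and reduced problems to be evaluated at the same matrix. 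Combined with $\partial_\omega T=\partial_\omega T^{\mathcal V_k}$ and $\partial_{\omega\omega}T=\partial_{\omega\omega}T^{\mathcal V_k}$ from \eqref{eq:Thermite2}--\eqref{eq:Thermite3}, and with the fact that the explicit $\gamma$-dependence of the block structure makes $\partial_\gamma T$, $\partial_{\gamma\gamma}T$, and—after differentiating \eqref{eq:Thermite2} in $\gamma$—$\partial_{\omega\gamma}T$ coincide at $\omega_j$ for all $\gamma$, every second partial of $\sigma_2$ in the expression for $\mu''$ agrees across the two problems. Care must still be taken to confirm that simplicity and the attendant analyticity transfer to the reduced problem and to handle the boundary case $\gamma(\omega_j)=1$, but granting these, $\mu''(\omega_j)=[\mu^{\mathcal V_k}]''(\omega_j)$ follows.
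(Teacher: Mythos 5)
Your proposal is correct and follows essentially the same route as the paper: obtain the block Hermite interpolation conditions $T(\omega_j,\gamma)=T^{\cV_k}(\omega_j,\gamma)$, $\partial_\omega T(\omega_j,\gamma)=\partial_\omega T^{\cV_k}(\omega_j,\gamma)$ and $\partial^2_\omega T(\omega_j,\gamma)=\partial^2_\omega T^{\cV_k}(\omega_j,\gamma)$ for all $\gamma$ from Theorem~\ref{thm:Gal_interpolate}, then read off each assertion via the analytic derivative formulas for simple singular values. Two of your steps differ in a way worth recording. For part (ii) you argue directly that $\sigma_2(\omega_j,\cdot)$ and $\sigma_2^{\cV_k}(\omega_j,\cdot)$ coincide as functions on $(0,1]$, so their infima and (by unimodality) their minimizers coincide; the paper instead runs a chain of inequalities $\mu(\omega_j)\ge\mu^{\cV_k}(\omega_j)\ge\mu(\omega_j)$ --- your version is shorter and equivalent given part (i). For part (iv) the paper's proof is a one-line appeal to ``similar arguments'' using only $\partial^2_\omega T=\partial^2_\omega T^{\cV_k}$, whereas you correctly observe that $\mu''(\omega_j)=\partial_{\omega\omega}\sigma_2-(\partial_{\omega\gamma}\sigma_2)^2/\partial_{\gamma\gamma}\sigma_2$ also requires the mixed and $\gamma\gamma$ partials to match, and that these follow by differentiating the interpolation identities (which hold for every $\gamma$) in the $\gamma$ direction, together with the fact that second-derivative formulas for a simple singular value involve the full set of singular triplets of $T$ --- hence the necessity of full matrix equality rather than mere equality of $\sigma_2$. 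This fills in precisely what the paper leaves implicit. Your remaining caveats (transfer of simplicity to the reduced problem, which follows from $T=T^{\cV_k}$ at the evaluation point, and the boundary case $\gamma(\omega_j)=1$, which the paper excludes from its smooth analysis anyway) are handled no less carefully than in the paper.
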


\begin{proof}
\noindent \begin{enumerate}
\item[(i)] 
From the lines \ref{init_subspaces}, \ref{later_subspaces}, \ref{orth_subspaces} and Theorem \ref{thm:Gal_interpolate}
we have $H(\omega_j) = H^{\cV_k} (\omega_j)$ and hence
\begin{equation}\label{eq:Tequality1}
T({\omega_j}, \gamma) = T^{\cV_k} ( {\omega_j},\gamma),
\end{equation}
for all $\gamma\in(0,1],$ from which the result follows immediately.
\item[(ii)] 
Observe that, for all $\gamma\in(0,1],$
$$\sigma_2(\omega_j, \gamma(\omega_j) )\leq \sigma(\omega_j,\gamma) \quad \quad \text{and} \quad \quad \sigma_2^{ {\mathcal V}_k}(\omega_j,\gamma^{\cV_k} (\omega_j) )\leq \sigma^{ {\mathcal V}_k}(\omega_j,\gamma).
$$ 
Combining these with part (i) we get
\begin{align*}\label{equality1}
\mu(\omega_j) &= \sigma_2(\omega_j,\gamma (\omega_j) ) = \sigma_2^{{\mathcal V}_k}(\omega_j,\gamma(\omega_j))  \\ 
& \geq  \mu^{ {\mathcal V}_k}(\omega_j) = \sigma_2^{ {\mathcal V}_k}(\omega_j,\gamma^{ {\mathcal V}_k} (\omega_j) ) \\
&= \sigma_2(\omega_j,\gamma^{{\mathcal V}_k} (\omega_j) ) \geq \sigma_2(\omega_j,\gamma (\omega_j) ) \\
&= \mu(\omega_j)
 \end{align*}
Hence, the inequalities above can be replaced by equalities. So, we have $\mu(\omega_j) = \mu^{ {\mathcal V }_k}(\omega_j).$ 
Furthermore, since the function to be minimized in \eqref{eq:mu} is unimodal, we have 
$\gamma (\omega_j) = \gamma^{{\mathcal V}_k} (\omega_j)$ for each $j\in\{1,\cdots,k\}.$
%
\item[(iii)] Suppose that $\sigma_2(\omega_j, \gamma(\omega_j))$ is simple, for a fixed $j\in\{1,\cdots,k\}.$ 
Then, $\mu(\omega)$ and $\mu^{ {\mathcal V}_k }(\omega)$ are differentiable at $\omega=\omega_j.$ Since $T(\omega_j,\gamma(\omega_j)) = T^{\cV_k} (\omega_j,\gamma(\omega_j) ) $ due to \eqref{eq:Tequality1}, the left and the right singular vectors corresponding to  
$\sigma_2(\omega_j,\gamma(\omega_j)) = 
\sigma_2^{{\mathcal V}_k}(\omega_j,\gamma(\omega_j))
$ 
are the same. Let us denote them by $u_2$ and $v_2,$ respectively, and assume, without loss of generality, that they are unit vectors. From the lines \ref{init_subspaces}, \ref{later_subspaces}, \ref{orth_subspaces} together with Theorem \ref{thm:Gal_interpolate} we have

\begin{equation} \label{eq:Tequality2}
\frac{\partial T}{\partial \omega} \left( {\omega_j}, \gamma \right) = 
\frac{\partial T^{\cV_k}}{\partial \omega} \left( {\omega_j},\gamma \right),
\end{equation}

Now, exploiting the analytical formulas for the derivatives of singular value functions \cite{Lancaster1964, Bunse-Gerstner1991} and employing \eqref{eq:Thermite2} we obtain
\begin{align*}
\mu' (\omega_j) &= 
\frac{ \partial \sigma_2}{\partial \omega} \left( \omega_j, \gamma(\omega_j) \right) =
\Real \left( u_2^T \;  \frac{\partial T}{\partial \omega} \left( \omega_j, \gamma(\omega_j) \right) \; v_2 \right) 
\\ & = 
\Real \left( u_2^T \;  \frac{\partial T^{\cV_k} }{\partial \omega} \left(\omega_j, \gamma(\omega_j) \right) \; v_2 \right) = 
\frac{\partial \sigma^{ {\mathcal V}_k} }{\partial \omega} \left(\omega_j, \gamma(\omega_j) \right)
\\
& =  \frac{\partial \sigma^{ {\mathcal V}_k} }{\partial \omega} \left(\omega_j, \gamma^{\cV_k}(\omega_j) \right)
= \left[\mu^{{\cV}_k}\right]' (\omega).
\end{align*}
\item[(iv)]
It follows from the similar arguments as in part (iii), only here we use the analytical formulas for the second derivatives of singular value functions and exploit 

$$\frac{\partial^2 T}{\partial \omega^2} \left( {\omega_j}, \gamma \right) =  \frac{\partial^2 T^{\cV_k}}{\partial \omega^2} \left( {\omega_j},\gamma \right), \quad \forall \gamma $$

\end{enumerate}
\end{proof}

\begin{remark}
Although \eqref{eq.outer} is a maximin problem, when we expand the subspace we only utilize the optimizer of the outer iteration, namely optimal $\omega,$ since $H$ depends only on $\omega.$ Therefore, the problem turns into the maximization of $\mu(\omega)$ and the Hermite interpolation properties between the original problem and the reduced one at only optimal $\omega$ for the reduced problem suffices to get a quadratic convergence as we shall see in the next section.
\end{remark}

\section{Rate of Convergence Analysis}\label{sec:roc}
This section is devoted to the rate of convergence analysis of the subspace framework discussed in the previous section. The main theorem states that the rate of convergence of Algorithm ~\ref{alg1} is quadratic, assuming that it converges at least locally. In particular, we consider the three consecutive iterates $\omega_{k-1}, \omega_k, \omega_{k+1}$ of Algorithm \ref{alg1} that are sufficiently close to a local or global maximizer $\omega_*$ of $\mu(\cdot).$
Before proceeding, we discuss some particular cases regarding the location of optimizers of the inner and outer optimization problems
which were analyzed in detail in \cite{Qiu1995}.

\hspace{2ex}

\noindent \textbf{Case 1: }$\omega_* = 0$. It turns out that, $\omega_* =0$ if and only if
$\Imag(H(\ri\omega_*)) = 0.$ 
Therefore, in this case the the inner minimization over $\gamma$ disappears. More generally, if $\rank(\Imag(M))\leq 1,$ then the minimization over $\gamma$ can be eliminated and $\mu(\cdot)$ can be calculated by means of an explicit formula \cite{Qiu1995}. In particular, when $w_*= 0,$ we have
\begin{align*}
r_\R(A;B,C) &= \sigma_2 \left(
\begin{bmatrix}
\Real( H(\ri\omega_*) ) & 0 \\ 0 & \Real(H(\ri\omega_*))
\end{bmatrix}\right) \\ 
&= \sigma_1 \left(
\begin{bmatrix}
\Real(H(\ri\omega_*)) & 0 \\ 0 & \Real(H(\ri\omega_*))
\end{bmatrix}\right) \\
&= \sigma_1(CA^{-1}B).
\end{align*}

\hspace{2ex}

\noindent \textbf{Case 2:} $\gamma_* =1,$ where and throughout $\gamma_*$
denotes the global minimizer of $\sigma_2(\omega_*,\cdot)$ over $(0,1],$ that is, 
$\gamma_* = \arg\min_{(0,1]}\sigma_2(\omega_*,\gamma).$
In this case, 
$$
\mu \left(H(\ri\omega_*)) \right) = \sigma_2(T(\omega_*,1)) = \sigma_1(T(\omega_*,1)) = \sigma_1(H(\ri\omega_*)).
$$
Hence, the computation of $r_\R(A;B,C)$ becomes equivalent to the computation of $\cH_\infty$ norm, which is already addressed in \cite{Aliyev2017}.

\hspace{2ex}

We remark that in practice, our method efficiently works for the non-smooth cases above (see Section \ref{sec:nr}). However,  the rate of convergence analysis discussed in this section does not cover these cases. To this end, throughout this section we assume that the cases 1 and 2 mentioned above are excluded. That is, we assume that, $\omega_*\neq 0,$ $\gamma_*\neq 1.$ 
More generally, in the rate of convergence analysis we address the smooth setting only. In other words, we assume in the rest of this section that the following smoothness assumption holds. 

\begin{assumption}[Smoothness]\label{ass:nonsm} The singular value $\sigma_2(\omega_*, \gamma_*)$ of $T(\omega_*, \gamma_*)$ is simple.
\end{assumption}
The rate of convergence analysis for 
the non-smooth optimization of the largest eigenvalue function of a Hermitian matrix 
that depends on one parameter is addressed in \cite{Mengi2018}. As a future work, the idea of \cite{Mengi2018} can be adapted to the non-smooth cases of the problem of this work. 

\hspace{2ex}

\noindent \textbf{Case 3: }The minimum of $\sigma(\omega,\cdot)$ is not attained
in $(0,1].$ That is, 
$$
\mu \left(H(\ri\omega) \right) = \inf_{ \gamma \in (0,1]} \sigma_2(\omega,\gamma) = \lim_{\gamma\rightarrow 0} \sigma_2(\omega,\gamma).
$$
It is also shown in \cite{Qiu1995} that this case is possible if and only if $\rank(\Imag(H(\ri\omega)) ) = 1.$ This holds, for example, when $m=1$ or $p=1.$ From \cite{Qiu1995}, we know that in this case
$$
\mu \left( H(\ri\omega) \right) = \max\left\{\sigma_1\left( U_2^T \Real( H (\ri\omega) ) \right) , \sigma_1 \left( \Real ( H(\ri\omega) ) V_2 \right) \right\},
$$
where 
$$
\begin{bmatrix}
U_1 & U_2
\end{bmatrix}
\begin{bmatrix}
\sigma_1( \Imag(H(\ri\omega)) ) & 0 \\ 0 & 0
\end{bmatrix}
\begin{bmatrix}
V_1 & V_2
\end{bmatrix}^T
$$
is a real singular value decomposition of $\Imag(H(\ri\omega)).$

\begin{remark}
The subsequent arguments of this section, in particular, the Lipschitz continuity and boundedness of the derivatives of the singular value functions, does not apply when $\gamma\to 0.$ To cope with this difficulty, throughout this section, we assume that the domain of the inner minimization problem is $\Omega: = [\Gamma,1],$ for some small number $\Gamma$ (in practice we set $\Gamma= 10^{-8}$).
\end{remark}
Results of this section are uniform over all subspaces $\cV_k$ and orthonormal bases $V_k$ as long as the satisfy they following nondegeneracy assumption.

\begin{assumption}[Non-degenracy]\label{ass:nondeg}
For a given $\beta>0,$ we have 
\begin{equation}\label{eq:nondeg}
\sigma_{\min} \left(D^{\cV_k}(\ri\omega_*)   \right)\geq\beta
\end{equation}
where $D^{\cV_k}(s)  := 
(sI-A^{V_k})$ and $\sigma_{\min}(\cdot)$ denotes the smallest singular value of its matrix argument.
\end{assumption}

Assumption \ref{ass:nondeg} combined with the Lipschitz continuity of $\sigma_{\min} (\cdot)$ implies the boundedness of the smallest singular value in \ref{eq:nondeg} away from zero in a neighborhood of $\omega_*.$ Formally, there exists a neighborhood $\mathcal N (\omega_*)$ of $\omega_*$ such that
\begin{equation}\label{eq:sigma_min_bound}
\sigma_{\min} \left(D^{\cV_k}(\ri\omega)   \right) \geq \beta/2,\quad \forall \omega \in \mathcal N (\omega_*),
\end{equation}
see the beginning of Lemma A.1 in \cite{Aliyev2020}.

The next result concerns the uniform Lipschitz continuity of the singular value functions $\omega\mapsto \sigma^{\cV_k} (\omega,\cdot).$ 
For the proofs of the assertions (i) and (ii) we refer to \cite[Lemma A.1]{Aliyev2020}. The proof of the assertion (iii) follows from part (ii); see \cite[Lemma 8, (ii)]{Mengi2018}. Here we make use of the following notations
$$\mathcal{I}(\omega_*, \delta) :=  (\omega_* - \delta, \omega_* + \delta),
\quad 
\mathcal{I}(\gamma_*, \delta) :=  (\gamma_* - \delta, \gamma_* + \delta)
$$
and
$$
\overline{\mathcal{I}} (\omega_*, \delta):=
[\omega_* - \delta, \omega_* + \delta] 
\quad 
\overline{\mathcal{I}}(\gamma_*, \delta):=
[\gamma_* - \delta, \gamma_* + \delta]
$$ 
for given $\omega\in\R, \gamma\in\Omega$ and $\delta>0.$

\noindent By a constant, here and in the rest of this section, we mean a scalar that may depend only on the parameters of the original problem and independent of the subspace $\cV_k$ and orthonormal basis $V_k$ for the subspace.

\begin{lemma}[Uniform Lipschitz Continuity]\label{lemma:lips}
Suppose that Assumption \ref{ass:nondeg} holds.
There exist constants $\delta_\omega, \zeta$ that 
satisfy the following:
\begin{itemize}
\item[\bf (i)] For all $\gamma\in\Omega,$ 
$$
\norm{T^{\cV_k}(\widetilde{\omega}, \gamma ) - T^{\cV_k}(\widehat{\omega}, \gamma ) }_2 \leq \zeta |\widetilde{\omega} - \widehat{\omega}| \quad \forall \widetilde{\omega}, \widehat{\omega} \in\overline{\mathcal{I}} (\omega_*, \delta_\omega).
$$

\item[\bf (ii)] For all $\gamma\in\Omega,$ and $i = 1,2,3,$
$$
\left| \sigma_i^{\cV_k}(\widetilde{\omega}, \gamma ) - \sigma_i^{\cV_k}(\widehat{\omega}, \gamma ) \right|  \leq \zeta |\widetilde{\omega} - \widehat{\omega}| 
\quad \forall \widetilde{\omega}, \widehat{\omega} \in \overline{\mathcal{I}} (\omega_*, \delta_\omega).
$$

\item[\bf (iii)]

$$
\left |\mu^{\cV_k}(\widetilde{\omega}) - \mu^{\cV_k}(\widehat{\omega}) \right | \leq \overline{\zeta} |\widetilde{\omega} - \widehat{\omega}|
 \quad \forall \widetilde{\omega}, \widehat{\omega}
\in \overline{\mathcal{I}} (\omega_*, \delta_\omega).
$$
\end{itemize}
\end{lemma}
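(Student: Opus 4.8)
The plan is to reduce everything to part (i), which controls the variation of the reduced block matrix $T^{\cV_k}$ in the frequency variable $\omega$, uniformly over the subspace $\cV_k$ and its orthonormal basis $V_k$. Once (i) is in hand, part (ii) follows from the perturbation inequality for singular values, $\abs{\sigma_i(M) - \sigma_i(N)} \le \norm{M-N}_2$, applied with $M = T^{\cV_k}(\widetilde{\omega}, \gamma)$ and $N = T^{\cV_k}(\widehat{\omega}, \gamma)$; and part (iii) follows from (ii) with $i = 2$ by the standard fact that the pointwise minimum over $\gamma \in \Omega$ of a family of functions that are Lipschitz in $\omega$ with a common constant is again Lipschitz in $\omega$ with the same constant. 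Concretely, if $\gamma_1, \gamma_2 \in \Omega$ attain $\mu^{\cV_k}(\widetilde{\omega})$ and $\mu^{\cV_k}(\widehat{\omega})$ respectively, then $\mu^{\cV_k}(\widetilde{\omega}) - \mu^{\cV_k}(\widehat{\omega}) \le \sigma_2^{\cV_k}(\widetilde{\omega}, \gamma_2) - \sigma_2^{\cV_k}(\widehat{\omega}, \gamma_2) \le \zeta\abs{\widetilde{\omega} - \widehat{\omega}}$, and symmetrically, so $\overline{\zeta} = \zeta$ suffices.

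For part (i), I would bound the $\omega$-derivative of $T^{\cV_k}$ and then integrate. Differentiating the reduced transfer function and using $\tfrac{d}{d\omega}(\ri\omega I - A^{V_k})^{-1} = -\ri\,(\ri\omega I - A^{V_k})^{-2}$ gives
\[\frac{\partial}{\partial\omega} H^{\cV_k}(\ri\omega) = -\ri\, C^{V_k}\left(\ri\omega I - A^{V_k}\right)^{-2} B^{V_k},\]
so that
\[\norm{\frac{\partial}{\partial\omega} H^{\cV_k}(\ri\omega)}_2 \le \norm{C^{V_k}}_2 \, \norm{\left(\ri\omega I - A^{V_k}\right)^{-1}}_2^2 \, \norm{B^{V_k}}_2.\]
Here the orthonormality $V_k^* V_k = I$ gives $\norm{C^{V_k}}_2 = \norm{CV_k}_2 \le \norm{C}_2$ and $\norm{B^{V_k}}_2 = \norm{V_k^* B}_2 \le \norm{B}_2$, bounds that do not see the particular $V_k$; meanwhile \eqref{eq:sigma_min_bound}, which is exactly where Assumption \ref{ass:nondeg} enters, yields $\norm{(\ri\omega I - A^{V_k})^{-1}}_2 = 1/\sigma_{\min}(D^{\cV_k}(\ri\omega)) \le 2/\beta$ on a neighborhood $\mathcal{N}(\omega_*)$. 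Since the operations $\Real(\cdot)$ and $\Imag(\cdot)$ do not increase the spectral norm, and the off-diagonal blocks of $\partial_\omega T^{\cV_k}$ carry factors $\gamma$ and $1/\gamma$ with $\gamma \in \Omega = [\Gamma, 1]$ (hence $1/\gamma \le 1/\Gamma$), assembling the four blocks produces a bound $\norm{\partial_\omega T^{\cV_k}(\omega, \gamma)}_2 \le \zeta$ with $\zeta$ depending only on $\norm{B}_2, \norm{C}_2, \beta, \Gamma$. Choosing $\delta_\omega$ so that $\overline{\mathcal{I}}(\omega_*, \delta_\omega) \subseteq \mathcal{N}(\omega_*)$ and integrating this derivative bound along the segment joining $\widehat{\omega}$ and $\widetilde{\omega}$ then delivers the claimed estimate.

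The step I expect to be the main obstacle — and the reason the statement insists the constants be independent of $\cV_k$ and $V_k$ — is the uniformity of the resolvent bound. A naive estimate would let $\norm{(\ri\omega I - A^{V_k})^{-1}}_2$ blow up as $\ri\omega$ approaches the spectrum of the compressed matrix $A^{V_k}$, and a priori the projected eigenvalues may sit differently relative to $\omega_*$ for different $k$. Assumption \ref{ass:nondeg} together with \eqref{eq:sigma_min_bound} is precisely the hypothesis that rules this out, furnishing a single neighborhood $\mathcal{N}(\omega_*)$ and a single lower bound $\beta/2$ valid for every $k$; this is the point I would treat most carefully, since the whole argument hinges on it. The remaining work — the block assembly and the mean-value integration — is routine, and indeed the assertions coincide with \cite[Lemma A.1]{Aliyev2020} and \cite[Lemma 8]{Mengi2018}, to which this bookkeeping can be delegated.
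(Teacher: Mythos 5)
Your proof is correct and takes essentially the route the paper itself relies on: the paper gives no in-text proof, delegating parts (i)--(ii) to \cite[Lemma A.1]{Aliyev2020}, which likewise bounds the $\omega$-derivative of the reduced transfer function via $\|B^{V_k}\|_2 \le \|B\|_2$, $\|C^{V_k}\|_2 \le \|C\|_2$ and the uniform resolvent bound \eqref{eq:sigma_min_bound} coming from Assumption \ref{ass:nondeg}, and part (iii) to \cite[Lemma 8, (ii)]{Mengi2018}, which is exactly the minimum-of-a-uniformly-Lipschitz-family argument you reproduce. I see no gaps; your identification of the uniformity of the resolvent bound over $k$ as the crux is also the right emphasis.
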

Since our main result relies on the smooth setting, in the next result, we state and prove that, there exist an interval in which $\mu(\cdot)$ and $\mu^{\cV_k}(\cdot)$ are both smooth under some mild assumptions. 

\begin{lemma}\label{thm:local_simple}
 Suppose that Assumptions \ref{ass:nonsm} and \ref{ass:nondeg} hold.
\begin{enumerate}
\item[\bf (i)]
There exist constants $\widetilde{\delta}_\omega>0, \widetilde{\delta}_{\gamma}>0$ such that
   both $\sigma_2(\omega, \gamma)$ and $\sigma_2^{{\mathcal V_k}}(\omega,\gamma)$ are simple, hence real analytic, for all 
   $\omega \in \mathcal I (\omega_*, \widetilde{\delta}_\omega)$ 
   and $\gamma \in \mathcal I (\gamma_*, \widetilde{\delta}_\gamma).$
\item[\bf (ii)] 
There exists a constant ${\delta}_\omega >0$ such that
both $\mu(\omega)$ and $\mu^{\cV_k} (\omega)$ are real analytic for all 
 $\omega \in \mathcal I (\omega_*, {\delta}_\omega).$
\end{enumerate}
\end{lemma}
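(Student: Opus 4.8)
The plan is to treat the two halves of the statement separately: I would handle the full quantities $\sigma_2(\omega,\gamma)$ and $\mu(\omega)$ by standard perturbation theory, and obtain the reduced quantities $\sigma_2^{\cV_k}(\omega,\gamma)$ and $\mu^{\cV_k}(\omega)$ by transporting the information from the full problem through the Hermite interpolation of Lemma~\ref{thm:basic_int} together with the uniform Lipschitz bounds of Lemma~\ref{lemma:lips}.

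First I would dispose of the full problem in (i). Since $A$ is stable, $\ri\omega_*\notin\Lambda(A)$, so $T(\omega,\gamma)$ is real analytic in a neighborhood of $(\omega_*,\gamma_*)$; by Assumption~\ref{ass:nonsm} the value $\sigma_2(\omega_*,\gamma_*)$ is simple, i.e. there is a spectral gap
\[
g := \min\{\sigma_1(\omega_*,\gamma_*)-\sigma_2(\omega_*,\gamma_*),\ \sigma_2(\omega_*,\gamma_*)-\sigma_3(\omega_*,\gamma_*)\} > 0,
\]
where $\sigma_3$ is present because the excluded Case~3 forces $\min\{p,m\}\ge 2$. As singular values are $1$-Lipschitz in the matrix $2$-norm (Weyl) and $T$ is continuous, this gap stays $\ge g/2$ on a box $\mathcal I(\omega_*,\widetilde\delta_\omega)\times\mathcal I(\gamma_*,\widetilde\delta_\gamma)$, where $\sigma_2$ is therefore simple and hence real analytic.

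For the reduced quantity in (i) I would use that $\cV_k$ contains the Hermite data at the interpolation points $\omega_{k-1},\omega_k$, which under the standing hypothesis of this section lie close to $\omega_*$. Assumption~\ref{ass:nondeg} together with \eqref{eq:sigma_min_bound} guarantees that $T^{\cV_k}(\omega,\gamma)$ is well defined and uniformly bounded near $\omega_*$, so its singular values obey the uniform estimate Lemma~\ref{lemma:lips}(ii). At the interpolation point $\omega_k$, Lemma~\ref{thm:basic_int}(i) gives $\sigma_i^{\cV_k}(\omega_k,\gamma)=\sigma_i(\omega_k,\gamma)$ for $i=1,2,3$ and all $\gamma$, so the reduced problem inherits the full gap there, which is still $\ge g/2$ once the box is small enough. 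Applying Lemma~\ref{lemma:lips}(ii) the reduced gaps satisfy $\sigma_1^{\cV_k}-\sigma_2^{\cV_k}\ge g/2-2\zeta|\omega-\omega_k|$ and likewise for $\sigma_2^{\cV_k}-\sigma_3^{\cV_k}$, both positive for $|\omega-\omega_k|<g/(8\zeta)$. Since $|\omega_k-\omega_*|$ is small and $g,\zeta$ are constants independent of the subspace, this produces a box about $(\omega_*,\gamma_*)$, with radii depending only on the original data, on which $\sigma_2^{\cV_k}$ is simple; intersecting with the full-problem box fixes $\widetilde\delta_\omega,\widetilde\delta_\gamma$. The point making this uniform over all admissible $\cV_k$ is that both the transported gap and $\zeta$ are subspace-independent.

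For (ii) I would pass from joint simplicity to analyticity of the partial minimum. On the box from (i) the map $\sigma_2$ is real analytic; since Cases~1--3 are excluded, $\gamma_*\in(\Gamma,1)$ is an interior minimizer, so $\partial_\gamma\sigma_2(\omega_*,\gamma_*)=0$, and unimodality in $\gamma$ (Qiu \emph{et al.}) guarantees this stationary point is the unique global minimizer. Using the strict second-order condition $\partial_\gamma^2\sigma_2(\omega_*,\gamma_*)>0$, the implicit function theorem applied to $\partial_\gamma\sigma_2(\omega,\gamma)=0$ yields a real analytic branch $\omega\mapsto\gamma(\omega)$ with $\gamma(\omega_*)=\gamma_*$, so $\mu(\omega)=\sigma_2(\omega,\gamma(\omega))$ is real analytic on some $\mathcal I(\omega_*,\delta_\omega)$. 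For $\mu^{\cV_k}$ I would repeat the argument at $\omega_k$: because $\sigma_2^{\cV_k}(\omega_k,\cdot)=\sigma_2(\omega_k,\cdot)$ as functions of $\gamma$, all $\gamma$-derivatives match there, so $\gamma^{\cV_k}(\omega_k)=\gamma(\omega_k)$ is interior (Lemma~\ref{thm:basic_int}(ii)) and the second-order condition transfers by continuity; the implicit function theorem then gives $\gamma^{\cV_k}(\cdot)$ analytic near $\omega_k$, hence $\mu^{\cV_k}$ analytic near $\omega_*$, with radius again controlled by subspace-independent constants. The main obstacle is the reduced case of (i): because $\omega_*$ is not itself an interpolation point, simplicity of $\sigma_2^{\cV_k}$ there cannot be read off directly and must be transported from a nearby interpolation point, and one must verify that the gap and Lipschitz constants entering this transport are genuinely independent of $\cV_k$. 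A secondary subtlety is justifying the strict condition $\partial_\gamma^2\sigma_2(\omega_*,\gamma_*)>0$ required by the implicit function theorem in (ii), which unimodality alone does not supply.
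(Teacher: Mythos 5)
Your proposal follows essentially the same route as the paper: part (i) establishes a spectral gap for $\sigma_2$ near $(\omega_*,\gamma_*)$ by continuity and then transports it to $\sigma_2^{\cV_k}$ via the interpolation equalities at $\omega_k$ combined with the uniform (subspace-independent) Lipschitz bounds of Lemma~\ref{lemma:lips}, and part (ii) uses the implicit function theorem to confine $\gamma(\omega)$ to the simplicity box and conclude analyticity of $\mu$ and $\mu^{\cV_k}$. If anything you are more explicit than the paper about the nondegeneracy condition $\partial_\gamma^2\sigma_2(\omega_*,\gamma_*)>0$ that the implicit-function-theorem step tacitly requires, which the paper's proof invokes without comment.
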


\begin{proof}
\begin{enumerate}
\item[\bf (i)]
Since $A$ is stable, $H$ is analytic on the imaginary axis and so 
$(\omega, \gamma) \mapsto T(\omega,\gamma)$ is continuous on $\R\times \Omega.$ 
Consequently, $\sigma_1(\cdot,\cdot), \sigma_2(\cdot,\cdot), \sigma_3(\cdot,\cdot)$ are continuous functions. The continuity of $\sigma_1(\cdot,\cdot), \sigma_2(\cdot,\cdot), \sigma_3(\cdot,\cdot)$ 
and the assumption that $\sigma_{2} (\omega_*, \gamma_*)$ is simple imply that $\sigma_{2} (\omega,\gamma)$ remains simple
in a neighborhood $\mathcal N(\omega_*, \gamma_*)$ of $(\omega_*, \gamma_*).$
More precisely, 
\begin{eqnarray}\label{eq:simplicity1}
\sigma_1(\omega, \gamma) -  \sigma_2(\omega, \gamma) \geq \widetilde{\epsilon} \quad \text{and} \quad 
\sigma_2(\omega, \gamma) -  \sigma_3(\omega, \gamma) \geq \widetilde{\epsilon} 
\end{eqnarray}
for all $(\omega,\gamma) \in\mathcal N(\omega_*,\gamma_*),$ for some $\widetilde{\epsilon} >0.$
This shows that $\sigma_2(\omega,\gamma)$ is simple in 
$\mathcal N (\omega_*, \gamma_*)$.

Now, by exploiting the interpolation properties
$$
\sigma_i (\omega_k,\gamma) = \sigma^{\cV_k} (\omega_k,\gamma)
$$
for $i=1,2,3$ and $\gamma\in \Omega,$ as well as the uniform Lipschitz
continuity of $\sigma_i^{\cV_k} (\cdot,\cdot),$ for $i=1,2,3,$ and assuming without loss of generality that $\omega_k$ is close enough to $\omega_*$
we deduce that there exists a region $\mathcal I(\omega_*, \widetilde{\delta}_\omega ) \times \mathcal I(\gamma_*, \widetilde{\delta}_\gamma ) \subseteq \mathcal N (\omega_*, \gamma_*)$ in which $\sigma_2^{\cV_k} (\cdot,\cdot)$ is simple. That is,
\begin{equation} \label{eq:simplicity2}
\sigma_1^{\cV_k}(\omega, \gamma) -  \sigma_2^{\cV_k}(\omega, \gamma) \geq {\epsilon}, \;\;  
\sigma_2^{\cV_k}(\omega, \gamma) -  \sigma_3^{\cV_k}(\omega, \gamma) \geq {\epsilon} 
\end{equation}
for all $(\omega,\gamma) \in \mathcal I(\omega_*, \widetilde{\delta}_\omega ) \times \mathcal I(\gamma_*, \widetilde{\delta}_\gamma ) $
and for some constants ${\epsilon} \in (0,\widetilde{\epsilon}), 
\widetilde{\delta}_\omega>0, \widetilde{\delta}_\gamma>0.$
Therefore, $\sigma_2 (\cdot, \cdot)$ and $\sigma_2^{\cV_k} (\cdot, \cdot)$ are simple singular value functions of $T(\omega,\gamma)$ and  $T^{\cV_k}(\omega,\gamma)$ in 
$\mathcal I(\omega_*, \widetilde{\delta}_\omega ) \times \mathcal I(\gamma_*, \widetilde{\delta}_\gamma ).$ 
It follows that 
both $\sigma_2 (\cdot, \cdot)$ and $\sigma_2^{\cV_k} (\cdot, \cdot)$ are real analytic in $\mathcal I(\omega_*, \widetilde{\delta}_\omega ) \times \mathcal I(\gamma_*, \widetilde{\delta}_\gamma ). $

\item[\bf (ii)]
From the implicit function theorem, there exist constants $\delta_\omega>0, \delta_\gamma>0$ such that
for all $\omega\in \mathcal I(\omega_*, {\delta}_\omega ,)$ 
we have
$\gamma(\omega) \in \mathcal I(\gamma_*, {\delta}_\gamma ).$
Now, assume without loss of generality that $\delta_\omega\in(0, \widetilde{\delta}_\omega)$ and 
$\delta_\gamma \in(0, \widetilde{\delta}_\gamma),$ where $\widetilde{\delta}_\omega, \widetilde{\delta}_\gamma $ are as in \eqref{eq:simplicity2}.
Then for $ \omega\in \mathcal I(\omega_*, {\delta}_\omega ) \subset   \mathcal I(\omega_*, \widetilde{\delta}_\omega )
 $ 
we have
$\gamma(\omega)  \in \mathcal I(\gamma_*, {\delta}_\gamma )
\subset \mathcal I(\gamma_*, \widetilde{\delta}_\gamma ).$ 
Hence, \eqref{eq:simplicity2} implies 
\begin{equation} 
\sigma_1\left( \omega, \gamma(\omega) \right) -  \sigma_2 \left(\omega, \gamma(\omega) \right) \geq {\epsilon}\quad \text{and} \quad 
\sigma_2 \left( \omega, \gamma(\omega) \right) -  \sigma_3 \left(\omega, \gamma(\omega) \right) \geq {\epsilon}  
\end{equation}
for all $\omega \in \mathcal I  \left(\omega_*, {\delta}_\omega \right).$
Thus, $\sigma_2 \left( \omega, \gamma(\omega) \right)$ is simple 
and so $\mu(\omega)$ is real analytic in $ \mathcal I  \left(\omega_*, {\delta}_\omega \right).$
Considering $\omega_k$ close enough to $\omega_*$ and following the similar arguments as above we conclude that 
$\mu^{\cV_k}(\omega)$ is also real analytic in $ \mathcal I  \left(\omega_*, {\delta}_\omega \right).$
\end{enumerate}
\end{proof}
The next result concerns the uniform boundedness of the third derivatives of $\mu(\cdot)$ and $\mu^{\cV_k}(\cdot)$ in a neighborhood of $\omega_*.$ This uniform boundedness of the third derivatives gives rise to the uniform Lipschitz continuity of the second derivatives of these functions, which becomes important in our main result. For a proof we refer to Lemma A.2 (ii) and Proposition 3.5 in \cite{Aliyev2020}

\begin{lemma}\label{lemma:boundedness}
Suppose that Assumptions \ref{ass:nonsm} and \ref{ass:nondeg} hold. 
\begin{enumerate}
\item[\bf(i)]
Both $\mu(\cdot)$ and $\mu^{\cV_k}(\cdot)$ are at least three times continuously differentiable in $\mathcal I (\omega_*,\delta_\omega),$ where $\delta_\omega$ is as in Lemma ~\ref{thm:local_simple}.

\item[\bf(ii)]
For each $\hat{\delta}_{\omega}\in (0, \delta_\omega)$ there exists a constant $\xi>0$ such that for all $\omega\in \mathcal I (\omega_*, \hat{\delta}_{\omega})$ 
we have
$$\left|\mu'''(\omega)\right|\leq \xi \quad \text{and} \quad \left|\left[\mu^{\cV_k}\right]'''(\omega)\right|\leq \xi.
$$


\end{enumerate}
\end{lemma}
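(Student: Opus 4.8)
Assertion (i) is immediate from the real analyticity already in hand: Lemma~\ref{thm:local_simple}(ii) gives that both $\mu(\cdot)$ and $\mu^{\cV_k}(\cdot)$ are real analytic on $\mathcal I(\omega_*,\delta_\omega)$, and a real analytic function is $C^\infty$, hence in particular at least three times continuously differentiable. For the bound in (ii), the estimate on the original $\mu$ costs nothing: $\mu'''$ is continuous on the compact interval $\overline{\mathcal I}(\omega_*,\hat\delta_\omega)\subset\mathcal I(\omega_*,\delta_\omega)$, hence bounded there, so that $|\mu'''(\omega)|$ admits a finite bound on $\mathcal I(\omega_*,\hat\delta_\omega)$. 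The entire difficulty is to bound $[\mu^{\cV_k}]'''$ by the \emph{same} constant $\xi$, uniformly over all admissible subspaces $\cV_k$ and bases $V_k$.

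For this I would work from the representation $\mu^{\cV_k}(\omega)=\sigma_2^{\cV_k}(\omega,\gamma^{\cV_k}(\omega))$, with $\gamma^{\cV_k}(\omega)$ the analytic inner minimizer supplied by Lemma~\ref{thm:local_simple}. Differentiating three times and using the optimality relation $\partial_\gamma\sigma_2^{\cV_k}(\omega,\gamma^{\cV_k}(\omega))\equiv 0$, one writes $[\mu^{\cV_k}]'''$ as a fixed polynomial expression in the partials $\partial_\omega^i\partial_\gamma^\ell\sigma_2^{\cV_k}$ with $i+\ell\le 3$ and in $[\gamma^{\cV_k}]',[\gamma^{\cV_k}]''$; differentiating the optimality relation in turn expresses these two derivatives as rational functions of the same partials whose denominators are powers of $\partial_\gamma^2\sigma_2^{\cV_k}$ evaluated at the minimizer. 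It therefore suffices to bound, uniformly in $k$ and in $\omega\in\mathcal I(\omega_*,\hat\delta_\omega)$, the partials $\partial_\omega^i\partial_\gamma^\ell\sigma_2^{\cV_k}$ from above and the quantity $\partial_\gamma^2\sigma_2^{\cV_k}(\omega,\gamma^{\cV_k}(\omega))$ away from zero from below.

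The upper bounds follow from the analytic perturbation formulas for the derivatives of a simple singular value. Each partial $\partial_\omega^i\partial_\gamma^\ell\sigma_2^{\cV_k}$ is a finite expression in the singular vectors of $T^{\cV_k}$, the partials of $T^{\cV_k}$ up to order three, and reciprocals of the spectral gaps $\sigma_1^{\cV_k}-\sigma_2^{\cV_k}$ and $\sigma_2^{\cV_k}-\sigma_3^{\cV_k}$. The gaps are bounded below uniformly by \eqref{eq:simplicity2}; the $\omega$-partials of $T^{\cV_k}(\omega,\gamma)$ are built from powers of the resolvent $(\ri\omega I-A^{V_k})^{-1}$ and are controlled uniformly through Assumption~\ref{ass:nondeg} and \eqref{eq:sigma_min_bound}; the $\gamma$-partials involve only factors of the form $\gamma^{-j}\Imag(H^{\cV_k})$, which are harmless because $\gamma^{\cV_k}(\omega)$ stays in a compact subset of $(0,1]$ bounded away from $0$. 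Combining these yields uniform upper bounds for all partials appearing in the expression for $[\mu^{\cV_k}]'''$.

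The main obstacle is the uniform lower bound $\partial_\gamma^2\sigma_2^{\cV_k}(\omega,\gamma^{\cV_k}(\omega))\ge c>0$, which guards the denominators against degeneration. Here I would exploit the interpolation of Lemma~\ref{thm:basic_int}(i): since $T(\omega_k,\gamma)=T^{\cV_k}(\omega_k,\gamma)$ for every $\gamma$, the functions $\sigma_2(\omega_k,\cdot)$ and $\sigma_2^{\cV_k}(\omega_k,\cdot)$ coincide, and in particular their second $\gamma$-derivatives agree at $\omega=\omega_k$. The quantity $\partial_\gamma^2\sigma_2$ is continuous and strictly positive at $(\omega_*,\gamma_*)$ (the nondegeneracy of the interior inner minimizer already underlying Lemma~\ref{thm:local_simple}), hence bounded below by some $c$ on a neighborhood of $(\omega_*,\gamma_*)$; transporting this bound to $\sigma_2^{\cV_k}$ via the exact matching at $\omega_k$ together with analogous uniform Lipschitz bounds for the second $\gamma$-derivative, established as in Lemma~\ref{lemma:lips}, gives the uniform lower bound throughout $\mathcal I(\omega_*,\hat\delta_\omega)$, by the same interpolation-plus-Lipschitz mechanism that produces the uniform simplicity region in Lemma~\ref{thm:local_simple}. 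Substituting the upper bounds above and this lower bound into the rational expression for $[\mu^{\cV_k}]'''$ produces a uniform constant; taking its maximum with the bound for $\mu$ yields $\xi$. The delicate point throughout is the bookkeeping: every constant must be traceable to the gap $\epsilon$, the nondegeneracy constant $\beta$, the cutoff $\Gamma$, and the data of the original problem alone, so that no dependence on the subspace $\cV_k$ or basis $V_k$ survives.
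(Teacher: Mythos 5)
Your proof is correct and follows essentially the argument the paper relies on: the paper gives no in-text proof of this lemma but defers entirely to Lemma~A.2(ii) and Proposition~3.5 of \cite{Aliyev2020}, where the same mechanism is used --- real analyticity for part (i), and for part (ii) the derivative formulas for simple singular values with the gaps bounded below via \eqref{eq:simplicity2}, the resolvent of the reduced pencil controlled through Assumption~\ref{ass:nondeg} and \eqref{eq:sigma_min_bound}, and the inner minimizer kept in a compact subset of $(\Gamma,1)$. Your treatment of the uniform lower bound on $\partial_\gamma^2\sigma_2^{\cV_k}$ at the inner minimizer (interpolation at $\omega_k$ plus uniform Lipschitz transport) correctly supplies the one ingredient specific to the $\gamma$-minimization of this paper, and is consistent with the nondegeneracy implicitly invoked in the implicit-function-theorem step of Lemma~\ref{thm:local_simple}(ii).
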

Now, we are ready to present our main result. Here, we remind that $\omega_{k-1}, \omega_k, \omega_{k+1}$ are assumed to be sufficiently close to a local maximizer $\omega_*$ of $\mu(\cdot).$

\begin{theorem}[Local Quadratic Convergence]\label{thm:quad_conv}
Suppose that Assumptions \ref{ass:nonsm} and \ref{ass:nondeg} hold and $\mu''(\omega_*) \neq 0.$ 
Then, for Algorithm \ref{alg1}, there exists a constant $c>0$ such that
 
$$ 
\frac{ | \omega_{k+1} - \omega_\ast |}{| \omega_k - \omega_\ast |^2}
 \leq  c 
$$
\end{theorem}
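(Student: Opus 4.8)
The plan is to convert the Hermite interpolation of Lemma~\ref{thm:basic_int} into a bound on $\mu'(\omega_{k+1})$ and then use the nondegeneracy $\mu''(\omega_*)\neq 0$ to translate that into a bound on $\omega_{k+1}-\omega_*$. Throughout I would work on a fixed neighborhood: as part of the local convergence hypothesis I assume $\omega_{k-1},\omega_k,\omega_{k+1}\in\mathcal I(\omega_*,\hat{\delta}_\omega)$ with $\hat{\delta}_\omega\in(0,\delta_\omega)$ chosen so that Lemmas~\ref{thm:local_simple} and \ref{lemma:boundedness} apply. Then both $\mu(\cdot)$ and $\mu^{\cV_k}(\cdot)$ are real analytic on $\mathcal I(\omega_*,\hat{\delta}_\omega)$ and, crucially, their third derivatives are bounded there by a single constant $\xi$ that is independent of $k$.

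First I would introduce the error $e(\omega):=\mu(\omega)-\mu^{\cV_k}(\omega)$. Since $\cV_k$ carries the interpolation data at $\omega_k$, Lemma~\ref{thm:basic_int}(ii)--(iv) gives $e(\omega_k)=e'(\omega_k)=e''(\omega_k)=0$, i.e. $e$ has a triple zero at $\omega_k$. Combining this with the uniform bound $|e'''|\le 2\xi$ on $\mathcal I(\omega_*,\hat{\delta}_\omega)$ from Lemma~\ref{lemma:boundedness}, a Taylor expansion of $e'$ about $\omega_k$ yields
$$
|e'(\omega_{k+1})|\le \xi\,|\omega_{k+1}-\omega_k|^2 .
$$

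Next I would exploit the two first-order optimality conditions. Because $\omega_{k+1}$ is a global maximizer of the locally analytic reduced function $\mu^{\cV_k}$, it satisfies $[\mu^{\cV_k}]'(\omega_{k+1})=0$, so $\mu'(\omega_{k+1})=e'(\omega_{k+1})$ and hence $|\mu'(\omega_{k+1})|\le \xi\,|\omega_{k+1}-\omega_k|^2$. On the other hand $\mu'(\omega_*)=0$, and since $\mu''(\omega_*)\neq 0$ and $\mu''$ is continuous, for $\omega_{k+1}$ close enough to $\omega_*$ the mean value theorem gives $|\mu'(\omega_{k+1})|\ge \tfrac12|\mu''(\omega_*)|\,|\omega_{k+1}-\omega_*|$. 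Chaining the two estimates and writing $|\omega_{k+1}-\omega_k|\le|\omega_{k+1}-\omega_*|+|\omega_k-\omega_*|$ produces
$$
|\omega_{k+1}-\omega_*|\le \frac{2\xi}{|\mu''(\omega_*)|}\,\big(|\omega_{k+1}-\omega_*|+|\omega_k-\omega_*|\big)^2 .
$$

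Finally I would close the argument with a short bootstrapping step: writing $a=|\omega_{k+1}-\omega_*|$, $b=|\omega_k-\omega_*|$ and $M=2\xi/|\mu''(\omega_*)|$, the last display gives $a\le M(a+b)^2\le 2Ma^2+2Mb^2$; since $a$ is small by local convergence (so $2Ma\le\tfrac12$), this rearranges to $a\le 4Mb^2$, which is the claim with $c=4M=8\xi/|\mu''(\omega_*)|$. The step I expect to be most delicate is not any single calculation but ensuring that the global maximizer $\omega_{k+1}$ of $\mu^{\cV_k}$ actually lands in the fixed neighborhood on which $\mu$ and $\mu^{\cV_k}$ are smooth with $k$-independent third-derivative bound $\xi$; this uniformity over $\cV_k$ is exactly what Assumption~\ref{ass:nondeg} and Lemmas~\ref{lemma:lips}--\ref{lemma:boundedness} are designed to supply, and it is the reason the result is stated locally, with $\omega_{k-1},\omega_k,\omega_{k+1}$ all assumed sufficiently close to $\omega_*$.
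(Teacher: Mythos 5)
Your proposal is correct and follows essentially the same route as the paper's proof: both rest on the Hermite interpolation of $\mu$, $\mu'$, $\mu''$ at $\omega_k$ from Lemma~\ref{thm:basic_int}, the stationarity of $\omega_{k+1}$ for $\mu^{\cV_k}$ and of $\omega_*$ for $\mu$, the uniform third-derivative bound $\xi$ from Lemma~\ref{lemma:boundedness}, the lower bound on $|\mu''|$ near $\omega_*$, and the same final bootstrap via $(a+b)^2\le 2a^2+2b^2$. The only (cosmetic) difference is packaging: you Taylor-expand the error $e=\mu-\mu^{\cV_k}$ about its triple zero at $\omega_k$ and then invert $\mu'$ at $\omega_*$ by the mean value theorem, whereas the paper subtracts two integral-remainder Taylor identities after dividing by $\mu''(\omega_k)$.
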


\begin{proof}
Assume without loss of generality that $\omega_k, \omega_{k-1}$ lie in 
$\mathcal I (\omega_*, \delta_{\omega} )$ and are close enough to $\omega_*$ so that
$\mathcal I (\omega_k, h_k )\subset \mathcal I (\omega_*, \delta_{\omega} ),$ where $h_k := |\omega_k - \omega_{k-1} |$ and $\delta_{\omega}$ is as in Lemma \ref{thm:local_simple} and Lemma \ref{lemma:boundedness}.
Assume further that $\mu''(\omega_k) \neq 0,$ 
that is, 
\begin{equation}\label{eq:non_zero}
| \mu''(\omega_k) | \geq l
\end{equation}
 for some constant $l>0.$ Existence of such $l>0$ 
follows from the assumption $\mu''(\omega_*) \neq 0 $ and the continuity of $\mu''(\cdot)$ in $\mathcal I (\omega_*, \delta_{\omega} ).$
Due to analyticity of $\mu(\cdot)$ in $\mathcal I (\omega_*, \delta_{\omega} )$ (Lemma \ref{thm:local_simple}, part (ii))
we have 
\begin{equation}\label{eq:taylor}
0 = \mu'(\omega_*) = \mu'(\omega_k) + \int_0^1 \mu'' \left( \omega_* + t ( \omega_* - \omega_k) \right) (\omega_* - \omega_k)\mathrm{d}t.
\end{equation}
Dividing both sides of \eqref{eq:taylor} by $\mu''(\omega_k)$ we obtain
\begin{equation}\label{eq:taylor_organized}
	0 =  \frac{\mu'(\omega_k)}{\mu''(\omega_k)} + \left(\omega_\ast - \omega_k \right) +
	\frac{1}{\mu''(\omega_k)}
			\int_0^1 \left(	\mu'' \left(\omega_k + t (\omega_\ast - \omega_k )\right) - \mu''(\omega_k) 	\right)
							\left(\omega_\ast - \omega_k \right) \mathrm{d}t.
\end{equation}
Application of Taylor's theorem with integral remainder to $[\mu^{\cV_k}]' (\cdot)$ at $\omega_k,$ and optimality of $\omega_{k+1}$ with respect to 
$\mu^{\cV_k}(\cdot)$ give rise to
\begin{equation}\label{eq:taylor_red}
0 = [\mu^{\cV_k}]'(\omega_{k+1} ) = [\mu^{\cV_k}]'(\omega_k) + \int_0^1 [\mu^{\cV_k}]'' \left( \omega_k + t ( \omega_{k+1} - \omega_k) \right) (\omega_{k+1} - \omega_k)\mathrm{d}t.
\end{equation}
which implies
\begin{equation}\label{eq:taylor_organized_red}
\begin{split}
	0 &=  \frac{ [\mu^{\cV_k}]'(\omega_k)}{[\mu^{\cV_k}]''(\omega_k)} + \left(\omega_{k+1} - \omega_k \right) \\ 
	&+	\frac{1}{[\mu^{\cV_k}]''(\omega_k)}
			\int_0^1 \left(	[\mu^{\cV_k}]'' \left(\omega_k + t (\omega_{k+1} - \omega_k )\right) - [\mu^{\cV_k}]''(\omega_k) 	\right)
							\left(\omega_{k+1} - \omega_k \right) \mathrm{d}t.
\end{split}						
\end{equation}
Combining \eqref{eq:taylor_organized} and \eqref{eq:taylor_organized_red} and exploiting the Hermite interpolation properties 
$$
\mu'(\omega_k) = [\mu^{\cV_k}]'(\omega_k)
\quad
\text{and} \quad
\mu''(\omega_k) = [\mu^{\cV_k}]''(\omega_k)
$$
we deduce
\begin{multline}
0 = \omega_* - \omega_{k+1} + \frac{1}{\mu''(\omega_k)} 
 \int_0^1 \left(	\mu'' \left(\omega_k + t (\omega_\ast - \omega_k )\right) - \mu''(\omega_k) 	\right)
							\left(\omega_\ast 
							- \omega_k \right) \mathrm{d}t \\
							- \frac{1}{\mu''(\omega_k)}  \int_0^1 \left(	[\mu^{\cV_k}]'' \left(\omega_k + t (\omega_{k+1} - \omega_k )\right) - [\mu^{\cV_k}]''(\omega_k) 	\right)
							\left(\omega_{k+1} - \omega_k \right) \mathrm{d}t 
\end{multline}
implying
\begin{multline*}
\left | \omega_* - \omega_{k+1} \right |  \leq \left | \frac{1}{\mu''(\omega_k)} \right | 
\left | \int_0^1 \left( \mu'' \left(\omega_k + t (\omega_\ast - \omega_k )\right) - \mu''(\omega_k) \right)\left(\omega_\ast - \omega_k \right) \mathrm{d}t \right |   \\
              +  \left | \frac{1}{\mu''(\omega_k)} \right | \left | \int_0^1 \left( [\mu^{\cV_k}]'' \left(\omega_k + t (\omega_{k+1} - \omega_k )\right) - [\mu^{\cV_k}]''(\omega_k) \right)\left(\omega_{k+1} -\omega_k \right) \mathrm{d}t \right | 
\end{multline*}
In the last equation, we exploit the Lipschitz continuity of both $\mu''(\cdot)$ and $\left[ \mu^{\cV_k} \right]''(\cdot)$ in $\mathcal I (\omega_*, \delta_\omega)$ (which follows from Lemma \ref{lemma:boundedness})  and employ \eqref{eq:non_zero} to obtain
\begin{equation}\label{ineq1}
\left | \omega_* - \omega_{k+1} \right |  \leq  \xi/l  \left( |\omega_* - \omega_k |^2 +  |\omega_{k+1} - \omega_k |^2 \right) 
\end{equation}
where $\xi$ is the Lipschitz constant for $\mu''$ and $[\mu^\cV_k]''.$
The result follows by substituting the inequality
$$
|\omega_{k+1} - \omega_k |^2 \leq 2 |\omega_{k+1} - \omega_* |^2 + 2|\omega_{k} - \omega_* |^2
$$
in \eqref{ineq1}.
\end{proof}

\section{Numerical experiments}\label{sec:nr}
In this section, we present numerical results obtained by our MATLAB implementation of Algorithm~\ref{alg1}. 
First we introduce some important implementation details and the test setup. 
Then, we report the numerical results on several large-scale examples taken from Model Order Reduction Wiki (MOR Wiki) website\footnote{available at \url{https://morwiki.mpi-magdeburg.mpg.de/morwiki/index.php/Main_Page}}, EigTool\footnote{available at \url{https://www.cs.ox.ac.uk/pseudospectra/eigtool/}} and Compleib \footnote{available at \url{http://www.complib.de}}. In all examples the system matrices $(A,B,C)$ are real and $A$ is asymptotically stable.
Our numerical experiments have been performed using MATLAB (R2018b) on an iMac with an Intel Core i5 1.4 GHz processor and 4 GB memory

\subsection{Implementation Details and Test Setup}
At each iteration of Algorithm~\ref{alg1},
a reduced problem is solved, namely, $r^{\cV}(A;B,C)$ is calculated. Therefore, at each iteration we solve a small scale minimax problem. Since the cost function of the inner minimization problem is unimodal \cite{Qiu1995}, we utilize ``golden section search algorithm" to solve the inner minimization problem. In fact, we use MATLAB  function \texttt{fminbnd} to solve the inner minimization problem. The outer maximization problem is solved by means of \texttt{eigopt}, a MATLAB implementation of the algorithm in \cite{Mengi2014}. In this MATLAB package a lower and an upper bound for the optimal value of a given eigenvalue function is found by employing piece-wise quadratic support functions, and the algorithm terminates when the difference between these bounds is less than a prescribed termination tolerance. In our experiments the termination tolerance equals to $10^{-6}.$  
Algorithm~\ref{alg1} terminates in practice when the relative distance between 
$r^{\cV_k}(A;B,C)$ and $r^{\cV_{k-1}}(A;B,C)$
is less than a prescribed tolerance for some $k> 1$, or if the number of iterations exceeds a specified integer. Formally, the algorithm terminates, if
\begin{multline*}
	k >  k_{\max} \quad \text{or} \quad
	\left| r^{\cV_k}(A;B,C) -  
					 r^{\cV_{k-1}}(A;B,C) \right|
					 < 
			\varepsilon \cdot \frac{1}{2} \left| r^{\cV_k}(A;B,C) +  r^{\cV_{k-1}}(A;B,C) \right|.
\end{multline*}
In our numerical experiments, we set 
$\varepsilon = 10^{-4}$ and $k_{\max} = 15$.

\subsection{Test Results}
In Table \ref{tab:results_nonsmooth} and Table \ref{tab:results_smooth} we report the outcome of our numerical experiments. The number of additional iterations after the construction of the initial reduced function needed to return the $r_\R(A;B,C)$ is denoted by $n_{iter}$ in both tables. Furthermore, the dimension of $A,$ the number of the columns of $B$ and the number of the rows of $C$ are denoted by $n,m,p,$ respectively. 
\begin{table}[h]
 \caption{Numerical results of test examples for which $r_{\R}(A;B,C)$ is attained at $\omega_* = 0:$ (top) the results of Algorithm ~\ref{alg1} are compared with the results of the method in \cite{Guglielmi2017}; (bottom) the number of subspace iterations and runtimes required by 
\cite{Guglielmi2017} and that required by Algorithm \ref{alg1} are listed.
 } 
 \label{tab:results_nonsmooth}
\begin{center}
\begin{tabular}{c|ccc|cc}
 \hline 
  & & & &      \multicolumn{2}{c}{ computed $r_{\R(A;B,C)}$ }  \\ [0.5ex]
     Example & $n$ & $m$ & $p$  & Algor.~\ref{alg1} & \cite{Guglielmi2017}  \\ [0.5ex]
  \hline 
     \texttt{HF2D3}  		& 4489   &  2  & 4      &  6.87987e$-$01  & 6.87987e$-$01     \\
     \texttt{HF2D4}   		& 2025   &  2  & 4    &  2.78245e$-$02  & 2.78245e$-$02      \\
    \texttt{Thermal Block} & 7488 & 1 & 4  & 9.74509e$-$01    &  9.74509e$-$01  \\
    \texttt{Filter2D} & 1668 & 1 & 5  & 1.88560e$-$03    &  1.88561e$-$03  \\
    \texttt{Skewlap3D} & 1331 & 4 & 5  & 6.89850e$+$00    &  6.89850e$+$00 
   \\
    \texttt{Convec Diff} & 3600 & 1 & 1  & 3.45182e$-$01    &  3.45182e$-$01  \\
   \end{tabular}
   \end{center}

\begin{center}
\begin{tabular}{c|c|cc|c}
 \hline 
  &   &   \multicolumn{3}{c}{Time in s} \\ [0.5ex]
     Example & $n_{\rm iter}$ & Algor.~\ref{alg1} &  \cite{Guglielmi2017}  & ratio \\ [0.5ex]
  \hline 
     \texttt{HF2D3}  & 2	 &    0.75   &    56.01  & 74.68 \\
 \texttt{HF2D4}   	& 2	     &  0.71      & 11.42  & 16.08 \\
 \texttt{Thermal Block} & 3 & 10.05 & 109.05 & 
   10.85 \\
    \texttt{Filter2D}  & 2 & 1.27 & 13.88 & 
   11.02 \\
    \texttt{Skewlap3D} & 1 & 1.10 & 6.45 & 5.86
   \\
    \texttt{Convec Diff} & 2 & 3.13 & 14.19 & 
   4.52 \\
   \end{tabular}
   \end{center}
   \end{table}

\noindent In Table \ref{tab:results_nonsmooth}, we exhibit the results of the examples for which $r_\R(A;B,C)$ is attained at $\omega_* = 0.$ Here, we compare the results of our subspace method with the ones generated by \cite{Guglielmi2017} in which the perturbations are  bounded with respect to the Frobenius norm. The complex structured stability radius $r_\C(A;B,C)$ of each of the examples listed in Table \ref{tab:results_nonsmooth}, 
is also attained at $\omega_* = 0,$ which implies that $r_\R(A;B,C)=r_\C(A;B,C).$ Notice that in the complex case, that is, when $\F=\C,$ the spectral norm and Frobenius norm define the same stability radius $r_\C(A;B,C)$ \cite{Guglielmi2017}. Therefore, comparing the results of Algorithm \ref{alg1} with the ones generated by the approach in \cite{Guglielmi2017} is reasonable. 

\hspace{2ex}

From Table \ref{tab:results_nonsmooth} we see that the correct value of $r_\R(A;B,C)$ is found by Algorithm \ref{alg1} for each of the examples. In terms of the runtime, Algorithm \ref{alg1} outperforms the approach of \cite{Guglielmi2017}. The ratios between the time required by 
\cite{Guglielmi2017} and that required by Algorithm \ref{alg1} are listed in the last column of the bottom row of Table \ref{tab:results_nonsmooth}. 
 
\hspace{2ex} 
 
Recall that, the rate of convergence analysis discussed in Section \ref{sec:roc} relies on the smooth setting, and when $w_*=0,$ the singular value function $\sigma_2(,\cdot,)$ is not simple and hence not smooth. However, the quadratic convergence is still observed for this particular non-smooth case.
For all examples listed in Table \ref{tab:results_nonsmooth}, at most four iterations are required to estimate $r_\R(A;B,C)$ up to a given tolerance. Specifically, for the \texttt{Thermal Block} example, we report the errors in the iterates as a function of the number of iterations in Table \ref{tab:erros_nonsmooth}.
\begin{table}[h]
\caption{For the example \texttt{Thermal Block}, the optimizers $\omega_{k+1},$ the errors $|\mu_k-\mu_*|$ and $|r_k-r_*|$ are listed, where the short-hands 
$\mu_k := \mu^{\cV_k}(\omega_{k+1}),$ 
$\mu_*:= \mu(\omega_*),$ 
$r_k = r^{\cV_k}(A;B,C)$ and 
$r_* = r(A;B,C)$ are used.
As the ``exact'' solution we have taken the one we obtain from Algorithm \ref{alg1} after four iterations.}
\label{tab:erros_nonsmooth}
\begin{center}
\begin{tabular}{l|ccc}
\hline
	 $k$	  & $\omega_{k+1}$ & $|\mu_k - \mu_\ast | $
	 & $|r_k-r_*|$	\\	
\hline	
   0   &  -1.172e$-$02 & 1.026e$+$00 & 2.678e$+$04       \\
   1   &  0 & 7.465$-$01  & 2.601e$+$00                    \\
   2   &  0 & 6.656e$-$11  & 6.321e$-$11      \\
   3   &  0 &   0 & 0   \\
\end{tabular}
\end{center}
\end{table}

In Table \ref{tab:results_smooth}, we summarize the results of another set of examples. Here, the optimal frequency $\omega_*$ is different from zero and so the spectral norm and Frobenius norm may define different values for the real stability radius. For this reason, here we use \texttt{eigopt} \cite{Mengi2014} (for the unreduced problems) to compare our results with. 
\begin{table}[h]
 \caption{Numerical results of test examples for which $r_{\R}(A;B,C)$ is attained at $\omega_* \neq 0:$ (top) the results of Algorithm ~\ref{alg1} are compared with the results of the method in \cite{Mengi2014}; (bottom) the number of subspace iterations and runtimes required by \cite{Mengi2014} and that required by Algorithm \ref{alg1} are listed.} 
 \label{tab:results_smooth}
 \begin{center}
 \begin{tabular}{c|ccc|cc|cc}
 \hline
  & & & &  \multicolumn{2}{c}{$r_{\R}(A;B,C)$} & \multicolumn{2}{c}{Optimal frequency $\omega_*$}  \\
     Example & $n$ & $m$ & $p$ & Algor.~\ref{alg1}  & \cite{Mengi2014} & Algor.~\ref{alg1}  & \cite{Mengi2014}   \\
  \hline 
     \texttt{RCL circuit}  & 1841  &  16  & 16    &  8.26909e$-$02  & 8.26909e$-$02 & 1.34499e$-$01 & 1.34516e$+$00   \\
     \texttt{Iss}  & 1412  &  3  & 3   &  8.73788e$+$01  & 8.73788e$+$01 & 1.32378e$+$01 & 1.32378e$+$01   \\
    \texttt{Synt4K} & 4000 & 1 & 1 &  1.06956e$-$01 & 1.06956e$-01$ & -1.47046e$-$03 & -1.47046e$-$03 \\
   
    \texttt{G600} & 1201 & 1 & 3 & 6.87848e$+$00 & 6.87848e$+00$ & -3.04219e$-$01 & -0.30410e$-$01   \\
    
     \texttt{Demmel3K} & 3000 & 7 & 5  & 2.06428e$-$04 &  2.06428e$-$04 & 8.09805e$+$01 & 8.09805e$+$01   \\
     \texttt{Demmel5K} & 5000 & 2 & 2  & 7.64102e$+$00 &  7.64102e$+$00 & 4.26367e$+$01 & 4.26367e$+$01   \\
\end{tabular} 
\end{center}

\begin{center}
\begin{tabular}{c|c|cc|c}
 \hline
  & &      \multicolumn{2}{c}{Time in s} \\
     Example & $n_{\rm iter}$ & Algor.~\ref{alg1} & \cite{Mengi2014} & ratio \\
  \hline 
     \texttt{RCL circuit}    &  3  &   5.7 & 105.6  & 18.5 \\
     \texttt{Iss}  &  2   & 9.4      & 40.1  & 4.3 
     \\
    \texttt{Synt4K}  & 2  &  9.9 & 450.1 & 45.3 \\
   
    \texttt{G600} &  2  & 1.7 & 124.7 & 72.5 \\
    
     \texttt{Demmel3K}  & 2   & 1.0 & 8.4 & 8.4 \\
    
     \texttt{Demmel5K} & 2   & 4.59  & 26.5 & 5.8 \\
\end{tabular} 
\end{center}
\end{table}

For all examples, with or without subspace acceleration, we retrieve the same $r_\R(A;B,C)$ values up to the prescribed tolerance. 
The proposed subspace framework outperforms \texttt{eigopt} for all examples listed in Table \ref{tab:results_smooth}. The ratios of the runtimes are listed in the last column of the table. Again, we observe quadratic convergence in all examples.  The errors in the iterates as a function of the number of iterations for the \texttt{RCL circuit} example are reported in Table \ref{tab:erros_smooth}.
\begin{table}[t]
\caption{The errors of the iterates of Algorithm \ref{alg1}, the errors $|\mu_k-\mu_*|$ and $|r_k-r_*|$ are listed, for the example \texttt{RCL circuit}. Here the short-hands 
$\mu_k := \mu^{\cV_k}(\omega_{k+1}),$ 
$\mu_*:= \mu(\omega_*),$ 
$r_k = r^{\cV_k}(A;B,C)$ and 
$r_* = r(A;B,C)$ are used. As the ``exact'' solution we have taken the one we obtain by Algorithm \ref{alg1} after five iterations.}
\label{tab:erros_smooth}
\begin{center}
\begin{tabular}{l|ccc}
\hline
	 $k$	  & $|\omega_{k+1}-\omega_k|$ & $|\mu_k - \mu_\ast | $
	 & $|r_k-r_*|$	\\	
\hline	
   0   &  2.771e$-$03 & 4.781e$+$00 & 2.342e$-$02       \\
   1   &  6.664e$-$03  & 2.857$+$00  & 1.580e$-$02                    \\
   2   &  9.632e-06 & 2.422e$-$05  & 1.656e$-$07 \\
   3   &  0 &   0 & 0   \\
\end{tabular}
\end{center}
\end{table}

\section{Conclusion}
We have proposed a subspace framework to approximate the structured 
real stability radius for large-scale systems. The method for the computation of the structured real stability radius was based on an interpolatory model order reduction technique. 
In particular, we reduced the large-scale problems to a small one and by repeatedly solving small-scale problems, we obtain an estimate of the large-scale stability radius. Our subspace method yields a Hermite interpolation property between the full and reduced problems which ensures quadratic convergence. The rigorous analysis of this fast convergence was established as well as the efficiency of the proposed methods was demonstrated on several numerical experiments. 
The subspace idea addressed in this work can be applicable for the computation of the structured real stability radius of a large-scale time-delay system. Moreover, a different variant of the subspace method (see for example, \cite{Mengi2018,Kangal2018}) can be applied to approximate the unstructured real stability radius for large-scale systems. We aim to focus on these problems in future works.


\vskip 2ex

\noindent
\textbf{Acknowledgements.} 
The author is grateful to Emre Mengi for his invaluable feedback.

\bibliography{na_references}

\end{document}